\documentclass[a4paper,11pt]{article}       
\usepackage{enumerate}          
\usepackage{amssymb,amsmath,amsfonts,amsxtra,amsthm}            
\usepackage{pdfsync}
\usepackage{url}
%\lstset{language = python, frameround = fttt}
%\usepackage{caption} 
%\usepackage{pgf,tikz}
%\usepackage{mathrsfs}
%\usepackage{mathtools}
\usepackage{hyperref}
\usepackage{anysize}
\usepackage{graphicx,float,color}
\usepackage{epsfig}
\usepackage{tikz}

\graphicspath{{fig/}}

\begin{document}
\theoremstyle{plain}
\newtheorem{definition}{Definition}
\newtheorem{lemma}{Lemma}
\newtheorem{theorem}{Theorem}
\newtheorem{metatheorem}[definition]{(Meta)-theorem}
\newtheorem{proposition}[lemma]{Proposition}
\newtheorem{corollary}[definition]{Corollary}
\newtheorem{conjecture}{Conjecture}
\newtheorem{fact}{Fact}
\newtheorem{problem}[definition]{Problem}
\newtheorem{assumption}[definition]{Assumption}
\newtheorem{question}[definition]{Question}
\theoremstyle{definition}
\newtheorem{remark}{Remark}
\newtheorem{notation}[definition]{Notation}
\errorcontextlines=0

\newcommand{\R}{\mathbb{R}}
\newcommand{\Z}{\mathbb{Z}}
\newcommand{\Inv}{\text{Inv}}
\newcommand{\e}{\varepsilon}
\newcommand{\vol}{\text{vol}}
\newcommand{\tor}{(2\mathbb{T})^{2}}
\newcommand{\er}{{\overrightarrow{e_{r}}}}
\newcommand{\et}{{\overrightarrow{e_{\theta}}}}
\newcommand{\conv}{\text{conv}}
\renewcommand{\leq}{\leqslant}
\renewcommand{\geq}{\geqslant}
\newcommand{\todo}[1]{$\clubsuit$ {\tt #1}}

\newtheorem{note}[equation]{Note}
\newtheorem{example}[equation]{Example}
\title{Catching all geodesics of a manifold with moving balls and application to controllability of the wave equation}
\date{\today}
\author{Cyril LETROUIT}
\maketitle

\abstract{We address the problem of catching all speed $1$ geodesics of a Riemannian manifold with a moving ball: given a compact Riemannian manifold $(M,g)$ and small parameters $\e>0$ and $v>0$, is it possible to find $T>0$ and an absolutely continuous map $x:[0,T]\rightarrow M, t\mapsto x(t)$ satisfying $\|\dot{x}\|_{\infty}\leq v$ and such that any geodesic of $(M,g)$ traveled at speed $1$ meets the open ball $B_g(x(t),\e)\subset M$ within time $T$? Our main motivation comes from the control of the wave equation: our results show that the controllability of the wave equation can sometimes be improved by allowing the domain of control to move adequately, even very slowly. We first prove that, in any Riemannian manifold $(M,g)$ satisfying a geodesic recurrence condition (GRC), our problem has a positive answer for any $\varepsilon>0$ and $v>0$, and we give examples of Riemannian manifolds $(M,g)$ for which (GRC) is satisfied. Then, we build an explicit example of a domain $X\subset\R^2$ (with flat metric) containing convex obstacles, not satisfying (GRC), for which our problem has a negative answer if $\e$ and $v$ are small enough, i.e., no sufficiently small ball moving sufficiently slowly can catch all geodesics of $X$. 

\medskip 
\textbf{MSC classification:} 37N35, 93C20, 37D50, 35L05.}

\tableofcontents

\section{Introduction and main results} \label{intro}

There exist compact Riemannian manifolds $(M,g)$ for which, given any sufficiently small nonempty open subset of $M$, there exist geodesics which do not enter this set (e.g., if the manifold is the 2D sphere $\mathbb{S}^2$ or the 2D torus $\mathbb{T}^2$), or, said differently, no small nonempty open static set can ``catch'' all geodesics of the manifold. One may wonder whether, if we allow the subset to move according to an adequate path, even slowly, it would then become possible to ``catch'' all geodesics in some finite time $T>0$. Here, we assume that all geodesics travel at speed $1$. This is the problem we address in this paper. Besides its theoretical interest, this question also has an interesting application to the controllability of the wave equation, as we will explain in Section \ref{s:app}.

\smallskip

Let us consider $(M,g)$ a smooth connected compact Riemannian manifold of dimension $n\geq1$, with a smooth boundary if $\partial M\neq \emptyset$. 

\smallskip

When $\partial M\neq \emptyset$, the usual notions of geodesics and of bicharacteristics have to be generalized in order to take into account the reflections on $\partial M$. Generalized geodesics are usual geodesics inside $M$, and they reflect on $\partial M$ according to the laws of geometric optics. This generalization is called the generalized bicharacteristic flow of Melrose and Sj\"ostrand, see \cite{melrose1978singularities}. We do not recall the construction but we simply mention that, setting $Y=\mathbb{R}\times \overline{M}$, a generalized bicharacteristic $^{b}\gamma : \mathbb{R}\rightarrow \ ^{b}T^{*}Y$ is a continuous map which is uniquely determined if it has no point in $G^{\infty}$, the set of cotangent vectors with contact of infinite order with the boundary. Using $t$ as a parameter, generalized geodesics of $M$, traveling at speed $1$, are then the projection onto $M$ of generalized bicharacteristics.

\begin{remark}
If $\partial M$ is non-empty and not smooth, the generalized bicharacteristic flow is not necessarily well defined since there may be no uniqueness of a bicharacteristic at the points where $\partial M$ is not smooth. However, when uniqueness is ensured (for example in the case of conical singularities with right angles), our results are still true (see \cite[Remark 1.9]{le2017geometric} for more comments on this issue).
\end{remark}

We now recall the concept of time-dependent geometric control condition (in short, $t$-GCC, see \cite{le2017geometric}), which is particularly suited for our problem.
\begin{definition} \label{deftgcc}
Let $Q$ be an open subset of $\mathbb{R}\times \overline{M}$, and let $T>0$. We say that $(Q,T)$ satisfies the time-dependent geometric control condition (in short, $t$-GCC) if every generalized bicharacteristic $^{b}\gamma:\mathbb{R}\rightarrow \ ^{b}T^{*}Y$, $s\mapsto (t(s),y(s),\tau (s),\eta(s))$ is such that there exists $s\in\mathbb{R}$ such that $t(s)\in (0,T)$ and $(t(s),y(s))\in Q$. We say that $Q$ satisfies the the $t$-GCC if there exists $T>0$ such that $(Q,T)$ satisfies the $t$-GCC. 
\end{definition}

If there exists $T>0$ such that $(Q,T)$ satisfies $t$-GCC, the control time $T_{0}(Q)$ is defined by 
\[ T_{0}(Q)=\inf \{ T\in (0,+\infty) \  | \ (Q,T) \text{ satisfies the $t$-GCC}\}.\]

In this paper, the subsets $Q\subset \R\times \overline{M}$ which are considered are moving open balls.

\begin{definition} \label{defmoving}
Let $\varepsilon >0$ be a real number, $T>0$ be a fixed time and $x:[0,T]\rightarrow M$ be an absolutely continuous path. The moving domain $\omega(t)$ associated with the path $x(t)$ is defined for every $t\in [0,T]$ by $\omega(t)=B_g(x(t),\varepsilon)$, where $B_g(x(t),\varepsilon )$ is the open ball of center $x(t)$ and of radius $\varepsilon$ in $(M,g)$.
\end{definition}

In fact, we could consider more general moving domains, such as translations of a fixed shape (if $M\subset \R^n$), but for the sake of simplicity we keep Definition \ref{defmoving} for our moving domains.

\smallskip

Note also that we require all our paths $t\mapsto x(t)$ to be absolutely continuous because we need to define paths with bounded speed. 

\smallskip

Our problem then reads as follows:

\begin{quote}
\textbf{(P1):} Given small parameters $\e>0$ and $v>0$, is it possible to find $T>0$ and an absolutely continuous map $x:[0,T]\rightarrow M, t\mapsto x(t)$ satisfying $\|\dot{x}\|_{\infty}\leq v$ such that any generalized geodesic of $(M,g)$ traveled at speed $1$ meets the moving ball $\omega(t)=B_g(x(t),\e)\subset M$ within time $T$?
\end{quote}
Here $\|\dot{x}\|_\infty$ stands for the essential supremum over $[0,T]$ of $(g(\dot{x}(t),\dot{x}(t)))^{\frac12}$.

\paragraph{Role of the speed $v$.} In our results, the maximal speed $v$ plays a key role. Let us first remark that if $v$ is allowed to be very large, then it is easy to construct a moving domain $\omega(t)$ such that $t$-GCC is satisfied in time $T$ even with $T>0$ small.  For example, if we take a path $x(t)$ which, within short time (so that the geodesic rays do not have time to move much), passes near any point in $M$, then the associated moving domain $\omega (t)=B_g(x(t),\varepsilon )$ meets any geodesic ray. More precisely if $\varepsilon >0$ is fixed, and the absolutely continuous path $x:[0,\varepsilon /2]\rightarrow M$ is such that for any $x\in M$, there exists $t\in [0,\varepsilon /2]$ such that $|x(t)-x|<\varepsilon /4$, then the moving domain $\omega(t)=B_g(x(t),\varepsilon )$ meets any geodesic ray. Of course, in this case, the speed $|\dot{x}(t)|$ is very large (of the order of $\varepsilon^{-1}$) and is therefore not comparable with the speed of the geodesic rays (which is fixed to $1$).

\smallskip

Therefore, to make sense, positive answers to (P1) have to be established for a speed $v$ bounded independently of $\varepsilon$, or, even better, for any speed $v>0$. This is the case for example in Theorem \ref{thmovwave}.

\paragraph{Organization of the paper.} The paper is organized as follows. In Section \ref{intro}, we state the main results. Namely, in Section \ref{obsinfinitetime}, we provide a construction which shows that the answer to (P1) is positive for any $\e,v>0$ under a certain geodesic recurrence condition (GRC) on $(M,g)$. In Section \ref{examples} we give examples of manifolds $(M,g)$ satisfying (GRC). In Section \ref{s:neg}, we build an explicit example of domain $X\subset\R^2$ (with flat metric) containing convex obstacles, for which our problem has a negative answer if $\e$ and $v$ are small enough. In particular, this domain does not satisfy (GRC). In Section \ref{s:app}, we give an application of our results to the controllability of the wave equation. Finally, Section \ref{proofs} is devoted to the proof of our results.

\paragraph{Acknowledgment.} I warmly thank Emmanuel Tr\'elat for fruitful discussions and his careful reading of preliminary versions of this paper, and Yves Coud\`ene for giving a lot of information related to Theorem \ref{t:neg}. I also thank Romain Joly, Fran\c{c}ois Ledrappier and Marie-Claude Arnaud for interesting discussions.

\subsection{Main result} \label{obsinfinitetime}

Our first result roughly says that if for each geodesic trajectory $t\mapsto y(t)$ in $M$, there exists a small open (time-independent) set $U\subset M$ where the trajectory spends (asymptotically) some positive part of its time, then (P1) has a positive answer.

\begin{definition}Let $\varepsilon >0$ be a (small) positive real number. We say that the generalized geodesic $t\mapsto y(t)$ of $M$ satisfies the Geodesic Recurrence Condition (GRC) for $\varepsilon$ if
\begin{quote}
\textbf{(GRC)   } there exists an open ball $U\subset M$ (depending on $y(\cdot)$) of radius $<\varepsilon$ such that 
\begin{equation} \label{condliminf} \underset{T\rightarrow +\infty}{\liminf } \frac{|\{t\in [0,T]\mid y(t)\in U\}|}{T}>0.\end{equation}
\end{quote}
Here, the notation $|A|$ stands for the Lebesgue measure of a measurable subset $A\subset \R$.
\end{definition}

\begin{theorem}\label{thmovwave}
Let $\varepsilon >0$ be a positive real number such that any generalized geodesic $t\mapsto y(t)$ in $M$ satisfies (GRC) for $\varepsilon$. Let $v>0$ be a fixed speed. Then (P1) has a positive answer: there exist $T>0$ and an absolutely continuous path $x:[0,T]\rightarrow M$ satisfying $\|\dot{x}\|_\infty\leq v$ such that the moving domain $\omega(t)=B_g(x(t),\varepsilon) \cap M$ satisfies $t$-GCC in time $T$. 
\end{theorem}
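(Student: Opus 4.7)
My plan is to build $x$ as a sequential tour of finitely many target points $p_1,\ldots,p_N\in M$, staying at each $p_k$ for a carefully chosen duration and traveling at the maximal admissible speed $v$ in between, so that every generalized geodesic is eventually caught by the moving ball $\omega(t)$.

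The first step, which I expect to be the main obstacle, is a compactness reduction on the unit cotangent bundle $S^{*}M$. For each $\rho\in S^{*}M$, (GRC) produces an open ball $U_\rho\subset M$ of radius strictly less than $\varepsilon$, a positive density $\beta_\rho$, and a finite witness time $T^{*}_\rho$ at which $|\{t\in[0,T^{*}_\rho]:\gamma_\rho(t)\in U_\rho\}|\geq (\beta_\rho/2) T^{*}_\rho$. By continuous dependence of the generalized bicharacteristic flow of Melrose and Sj\"ostrand on initial conditions, this finite-time lower bound persists on a small neighborhood $V_\rho\subset S^{*}M$ of $\rho$, after enlarging $U_\rho$ slightly to a ball $\tilde U_\rho$ still of radius strictly less than $\varepsilon$. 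A finite subcover of $S^{*}M$ then yields open balls $\tilde U_1,\ldots,\tilde U_N$ centered at $p_1,\ldots,p_N$ together with uniform constants $\beta,T^{*}>0$, such that every generalized geodesic $\gamma$ admits an index $k=k(\gamma)\in\{1,\ldots,N\}$ for which the liminf density of $\gamma$ in $\tilde U_k$ is bounded below by $\beta$. The delicate point is extracting such a uniform asymptotic lower bound from the per-$\rho$ statement of (GRC) via continuity, since continuity gives only finite-time shadowing; this requires pairing the finite-time witness $T^{*}_\rho$ supplied by (GRC) with the enlargement of $U_\rho$ so that the original (GRC)-ball $U_\gamma$ sits inside some $\tilde U_k$ with a safety margin.

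For the second step, I would define $x$ piecewise: $x(t)=p_k$ during the $k$-th phase of length $T_k$, and $x$ travels at speed $v$ along a length-minimizing geodesic from $p_{k-1}$ to $p_k$ between consecutive phases (transit time $\tau_k=d_g(p_{k-1},p_k)/v$). The stay durations are chosen inductively so as to grow geometrically, $T_k>c\cdot\sum_{j<k}(T_j+\tau_j)$ with $c=2/\beta-1>0$; each $T_k$ is finite but the sequence grows exponentially in $k$. The key geometric observation is that $\tilde U_k\subset B_g(p_k,\varepsilon)=\omega(t)$ whenever $x(t)=p_k$, so that catching $\gamma$ during phase $k$ reduces to showing that $\gamma$ enters $\tilde U_k$ at some moment while $x$ is stationed at $p_k$.

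The third step is a short pigeonhole argument. Fix a generalized geodesic $\gamma$, set $k=k(\gamma)$, and denote $\mathcal{T}_k=\sum_{j\leq k}(T_j+\tau_j)$. For $\mathcal{T}_k$ large enough, the uniform liminf lower bound gives $|\{t\in[0,\mathcal{T}_k]:\gamma(t)\in\tilde U_k\}|\geq(\beta/2)\mathcal{T}_k$. The geometric growth of $T_k$ is tuned so that $(\beta/2)\mathcal{T}_k>\mathcal{T}_k-T_k$, which is precisely the total time outside phase $k$. Therefore at least one time $t$ with $\gamma(t)\in\tilde U_k$ must lie inside phase $k$, and at this $t$ one has $\gamma(t)\in\tilde U_k\subset\omega(t)$. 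Hence $\omega$ satisfies $t$-GCC in time $T:=\mathcal{T}_N$, which concludes the proof.
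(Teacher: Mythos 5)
Your Step 1 (the compactness reduction) contains the essential gap, and it is not one that continuity of the flow can repair. (GRC) is an \emph{asymptotic} condition: for a geodesic $\gamma$ it supplies a ball $U_\gamma$ of radius $<\varepsilon$ in which the $\liminf$ density of $\gamma$ is positive. Continuous dependence of the Melrose--Sj\"ostrand flow propagates only \emph{finite-time} information. A nearby geodesic $\gamma'$ may spend a large fraction of $[0,T^{*}_\rho]$ in $\tilde U_\rho$ and then never return, so the $\liminf$ density of $\gamma'$ in $\tilde U_\rho$ can perfectly well be zero. Of course $\gamma'$ also satisfies (GRC), but with a possibly completely different ball $U_{\gamma'}$, and there is no continuity of $\rho\mapsto U_\rho$ that would force $U_{\gamma'}$ near $U_\rho$. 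Your proposed fix --- arrange that the (GRC)-ball $U_\gamma$ of \emph{every} $\gamma$ sits inside some $\tilde U_k$ --- is also not achievable with \emph{finitely} many balls of radius $\leq\varepsilon$: the radius of $U_\gamma$ is only known to be $<\varepsilon$ and can approach $\varepsilon$, so no finite net margin works uniformly. Consequently neither the uniform density $\beta$ nor the finite list $\tilde U_1,\ldots,\tilde U_N$ that your one-shot tour requires is available, and Step 3's pigeonhole has nothing to pivot on. (A secondary issue: even granted a uniform $\beta$, the $\liminf$ would only give the $\beta/2$ density bound past some $\gamma$-dependent time, so you would further need a uniform transient time before which nothing is promised; a one-pass tour cannot afford to wait.)

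The paper's proof avoids exactly this. It takes a \emph{countable} dense sequence of centers $(x_i)$ and a dovetailing path of the form $x_1,x_2,x_1,x_2,x_3,x_4,x_1,\ldots,x_8,\ldots$ whose dwell time at each visit grows geometrically, so that for every index $i$ and every fraction $K<1$ there are arbitrarily late stays $[T',T'']$ at $x_i$ with $(T''-T')/T''>K$. Because the path revisits each $x_i$ infinitely often with ever larger relative durations, no uniform $\beta$ across geodesics is needed: for a given $\gamma$ one waits until a revisit to the appropriate $x_j$ (chosen by density so that $U_\gamma\subset B_g(x_j,\varepsilon)$, using the slack between radius $<\varepsilon$ and radius $\varepsilon$) is long enough relative to $1/\beta_\gamma$. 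Compactness of $S^*M$ enters only at the very end and in a much weaker way: the infimum catching time is upper semicontinuous on $S^*M$ (because $\omega(t)$ is open), hence bounded, which converts ``every geodesic is eventually caught'' into a uniform finite horizon $T$. That cheap use of compactness is what your heavier Step 1 was trying to do prematurely.
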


This answers to a problem raised in \cite[Section 3E]{le2017geometric}. We do not know of other references in the literature introducing condition (GRC). Determining the manifolds $(M,g)$ all of whose geodesics satisfy (GRC) seems to be a difficult question in general. In Section \ref{examples} we give some examples of manifolds $(M,g)$ for which (GRC) is satisfied.

\begin{remark} \label{countable}
Note that a given geodesic $t\mapsto y(t)$ satisfies (GRC) if and only if for any $s\in\R$, the geodesic $t\mapsto y(t+s)$ satisfies (GRC): (GRC) depends only on the trace of the geodesic since it is invariant by translations in time. We note that our proof of Theorem \ref{thmovwave} can easily be extended to the case where all traces of geodesics but a countable number satisfy (GRC). For the sake of simplicity, we did not include this obvious extension in the statement of the theorem. 
\end{remark}

\subsection{Examples of manifolds $M$ satisfying (GRC)} \label{examples}
In this section, we give some examples of smooth connected compact Riemannian manifolds $(M,g)$ for which all geodesics satisfy (GRC), so that Theorem \ref{thmovwave} applies. Let us first recall the definition of the dichotomy property (see \cite{smillie2000dynamics}).

\begin{definition}
A bounded manifold $(M,g)$ satisfies the dichotomy property if each of its geodesics is either periodic or uniformly distributed, the latter meaning that for every open set $U \subset M$, 
\begin{equation*} \underset{T\rightarrow +\infty}{\lim} \frac{|\{t\in [0,T] \mid y(t)\in U\}|}{T}=\frac{\vol_{g}(U)}{\vol_{g}(M)}.\end{equation*}
\end{definition}

Typical examples are the square and the rectangles with the flat metric. More generally, any polygon that tiles the plane by reflection has the dichotomy property. In fact, this property is satisfied by all ``lattice examples" (see \cite{smillie2000dynamics}).

\begin{proposition} \label{dicho}
If $(M,g)$ satisfies the dichotomy property, then all its geodesics satisfy (GRC).
\end{proposition}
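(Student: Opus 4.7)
The plan is to prove Proposition \ref{dicho} by case analysis on the dichotomy, treating periodic and uniformly distributed geodesics separately. In both cases, it suffices to exhibit a single open ball $U \subset M$ of radius $<\varepsilon$ for which the liminf in \eqref{condliminf} is strictly positive. Note that (GRC) allows $U$ to depend on the geodesic $y(\cdot)$, which gives us flexibility in the periodic case.

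For the uniformly distributed case, the conclusion is essentially immediate: pick \emph{any} open ball $U \subset M$ of radius, say, $\varepsilon/2$. Since $M$ is bounded and $U$ is a nonempty open set, $\vol_g(U) > 0$ and $\vol_g(M) < +\infty$, so the definition of uniform distribution gives
\[ \lim_{T \to +\infty} \frac{|\{t \in [0,T] \mid y(t) \in U\}|}{T} = \frac{\vol_g(U)}{\vol_g(M)} > 0, \]
which is stronger than the liminf condition \eqref{condliminf}.

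For the periodic case, let $P>0$ be the period of $y(\cdot)$. Pick any point $p = y(t_0)$ on the trajectory and let $U = B_g(p, \varepsilon/2)$. By continuity of the geodesic, there is an open interval $I \subset [0,P]$ containing $t_0$ (modulo $P$) on which $y(t) \in U$; let $\tau = |I| > 0$. Then on $[0, nP]$ the geodesic lies in $U$ for at least $n\tau$ units of time, so for any $T > 0$, writing $T = nP + r$ with $0 \leq r < P$ and $n = \lfloor T/P\rfloor$, one obtains
\[ \frac{|\{t \in [0,T] \mid y(t) \in U\}|}{T} \geq \frac{n \tau}{nP + r} \xrightarrow[T\to +\infty]{} \frac{\tau}{P} > 0, \]
giving positive liminf as required.

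The argument is essentially routine; the only point requiring a little care is to ensure the ball $U$ has radius strictly less than $\varepsilon$, which is automatic by taking radius $\varepsilon/2$ in both cases. Combining the two cases covers every geodesic under the dichotomy property and completes the proof.
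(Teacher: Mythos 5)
Your proof is correct and follows essentially the same two-case approach as the paper: for periodic geodesics, take a ball around a point on the trajectory and estimate the fraction of time per period spent there; for uniformly distributed geodesics, any ball works by definition. Your write-up of the periodic case is slightly more careful than the paper's (the paper states the liminf is $\geq 1/T_1$, which is a minor slip — your $\tau/P$ is the correct form of the bound).
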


If $(M,g)$ satisfies the weaker property that each geodesic is either periodic or uniformly distributed in some open subset $M' \subset M$, the same proof shows that each geodesic of $M$ also satisfies (GRC), so that Theorem \ref{thmovwave} also applies. With the same argument, we can prove the following proposition.

\begin{proposition} \label{disk} Any geodesic of the two-dimensional disk satisfies (GRC) but the two-dimensional disk with flat metric does not satisfy the dichotomy property. \end{proposition}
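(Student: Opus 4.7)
The plan is to exploit the integrability of the billiard flow in the disk. Normalizing the radius to one, I first recall the classical description of its trajectories: at each reflection the angle $\alpha\in[0,\pi/2)$ between the trajectory and the inward normal is preserved, every chord has length $2\cos\alpha$ and is tangent to the caustic circle of radius $\sin\alpha$, and between two consecutive bounces the boundary angle advances by $\pi-2\alpha$. Thus the bounce map on $\partial M\simeq\mathbb{R}/2\pi\mathbb{Z}$ is the rigid rotation $R_\alpha:\theta\mapsto\theta+(\pi-2\alpha)$, and the entire trajectory is contained in the closed annulus $A_\alpha=\{\sin\alpha\leq r\leq 1\}$.

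To prove (GRC) I distinguish two cases. If $\alpha/\pi\in\mathbb{Q}$ (including the diameter case $\alpha=0$), then $R_\alpha$ has finite order, the trajectory $y(\cdot)$ is periodic with some period $\mathcal{T}$, and any open ball $U$ of radius $<\varepsilon$ meeting the trajectory on a set of positive length $\ell$ satisfies \eqref{condliminf} with limit $\ell/\mathcal{T}>0$. If $\alpha/\pi\notin\mathbb{Q}$, I choose an open ball $U\subset M$ of radius $\rho<\varepsilon$ whose closure is strictly contained in the open annulus $\{\sin\alpha<r<1\}$ and which meets at least one chord of the trajectory. Let $f:\mathbb{R}/2\pi\mathbb{Z}\to[0,2\cos\alpha]$ be the continuous function that sends $\theta$ to the length of the intersection of $U$ with the chord emanating from the boundary point of angle $\theta$ (oriented compatibly with the trajectory). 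Weyl's equidistribution theorem for the irrational rotation $R_\alpha$ gives
\begin{equation*}
\frac{1}{N}\sum_{k=0}^{N-1} f\bigl(\theta_0+k(\pi-2\alpha)\bigr)\xrightarrow[N\to\infty]{}\frac{1}{2\pi}\int_0^{2\pi}f(\theta)\,d\theta.
\end{equation*}
Since each chord is traveled in time $2\cos\alpha$, after time $T$ the number $N$ of completed bounces satisfies $T=2N\cos\alpha+O(1)$ and the time spent in $U$ up to time $T$ equals $\sum_{k=0}^{N-1}f(\theta_k)+O(1)$. Dividing by $T$ yields
\begin{equation*}
\lim_{T\to\infty}\frac{|\{t\in[0,T]\mid y(t)\in U\}|}{T}=\frac{1}{4\pi\cos\alpha}\int_0^{2\pi}f(\theta)\,d\theta,
\end{equation*}
which is strictly positive since $f$ is continuous, non-negative and not identically zero.

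For the failure of the dichotomy property it suffices to exhibit a single $\alpha\in(0,\pi/2)$ with $\alpha/\pi\notin\mathbb{Q}$: the corresponding trajectory is non-periodic but lies entirely in the proper annulus $A_\alpha$, so for any open ball $U$ contained in the inner disk $\{r<\sin\alpha\}$ the ratio $|\{t\in[0,T]\mid y(t)\in U\}|/T$ vanishes identically, whereas uniform distribution would require it to converge to $\vol_g(U)/\vol_g(M)>0$.

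The only mildly delicate point is the passage from the discrete Weyl average over bounce angles to the continuous-time average; this relies on the constancy of the chord length and on the continuity of $f$ (itself a straightforward consequence of the convexity of $U$ and of the continuous rotation of the chords), both of which are automatic in the disk, so I do not expect any substantive obstacle.
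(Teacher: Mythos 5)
Your proof is correct and follows essentially the same route as the paper's: both reduce the disk billiard to an (irrational) rigid rotation on the boundary circle, appeal to equidistribution of that rotation to get the positive time average required by (GRC), and disprove the dichotomy property by observing that a non-periodic trajectory is confined to a proper annulus and so never meets a ball placed inside the caustic. Your write-up is if anything a bit more explicit than the paper's, since you spell out the chord-length function $f$ and the passage from the discrete Weyl average to the continuous-time average, a step the paper handles more informally via its projection map $\pi_\alpha$.
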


Let us also give a property which is somewhat weaker than (GRC) (although not exactly because the parameter $\varepsilon$ is fixed in (GRC) and arbitrary in Proposition~\ref{weaker}) but which is satisfied for any Riemannian manifold $(M,g)$.

\begin{proposition} \label{weaker}
For any geodesic $t\mapsto y(t)$ and any $\varepsilon >0$, there exists an open ball $B\subset M$ of radius $\varepsilon$ and an increasing sequence of times $(T_{n})_{n\in\mathbb{N}^{*}}$ tending to $+\infty$ such that
\begin{equation*} \underset{n\rightarrow +\infty}{\liminf} \frac{|\{t\in [0,T_{n}] \mid y(t)\in B\cap M\}|}{T_{n}}>0. \end{equation*}
\end{proposition}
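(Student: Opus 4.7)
My plan is to prove Proposition \ref{weaker} by a compactness plus pigeonhole argument, which requires essentially no geometric input beyond the compactness of $M$.

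First, since $(M,g)$ is a compact Riemannian manifold, it admits a finite cover by open balls $B_1,\ldots, B_N$ of radius $\varepsilon$ (with $B_i = B_g(x_i,\varepsilon) \subset M$). Fix a generalized geodesic $t\mapsto y(t)$. For every $T>0$, since the balls $B_i$ cover $M$, we have the pointwise inequality $\mathbf{1}_{[0,T]}(t) \leq \sum_{i=1}^N \mathbf{1}_{\{y(t)\in B_i\}}(t)$, and integrating in $t$ gives
\[
T \;\leq\; \sum_{i=1}^N \bigl|\{t\in [0,T]\mid y(t)\in B_i\}\bigr|.
\]
Hence, by the pigeonhole principle, for each $T>0$ there exists an index $i(T)\in\{1,\dots,N\}$ such that
\[
\bigl|\{t\in [0,T]\mid y(t)\in B_{i(T)}\}\bigr| \;\geq\; \frac{T}{N}.
\]

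Next I would promote this $T$-dependent index to a single ball valid along a subsequence. Pick any sequence $S_k\to+\infty$, obtaining indices $i(S_k)\in\{1,\ldots,N\}$. Since this sequence of indices takes values in a finite set, some value $i^\star$ is attained infinitely often, along a subsequence $T_n := S_{k_n}\to+\infty$. Setting $B := B_{i^\star}$, we then have
\[
\frac{\bigl|\{t\in [0,T_n] \mid y(t)\in B\cap M\}\bigr|}{T_n} \;\geq\; \frac{1}{N}
\]
for every $n$ (noting $B\cap M = B$ since $B\subset M$), which immediately yields $\liminf_{n\to\infty}$ of that quotient $\geq 1/N > 0$, as required.

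There is no real obstacle here: the only thing to check is that the set $\{t\in [0,T] \mid y(t)\in B_i\}$ is measurable, which follows because generalized geodesics $y(\cdot)$ are continuous (being projections of continuous bicharacteristics), and the $B_i$ are open. The only caveat worth recording is the gap with (GRC) emphasized in the statement: the radius of the ball $B$ produced here is exactly $\varepsilon$ (not strictly less than $\varepsilon$ as required by (GRC)), and the liminf is taken only along a subsequence $T_n\to\infty$ rather than over all $T\to\infty$; both features are intrinsic to this pigeonhole construction and explain why Proposition \ref{weaker} is genuinely weaker than (GRC).
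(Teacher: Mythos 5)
Your proof is correct, and it takes a genuinely different and more elementary route than the paper. The paper argues via weak-$*$ compactness of empirical measures: it forms $\mu_n = \frac{1}{n}\sum_{i=1}^n \delta_{y(i)}$, extracts a Prokhorov-convergent subsequence $\mu_{n_k}\rightharpoonup\mu$, locates a ball $M_1$ of radius $\varepsilon/4$ with $\mu(M_1)>0$, deduces $\liminf_k \mu_{n_k}(M_2)>0$ for the concentric ball $M_2$ of radius $\varepsilon/2$ via a bump function, and then passes from discrete to continuous time using the unit-speed bound (if $y(i)\in M_2$ then $y(t)\in B$ for $|t-i|\leq\varepsilon/2$). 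Your approach replaces all of this machinery by a finite subcover of the compact manifold $M$ by $N$ balls of radius $\varepsilon$ and a pigeonhole argument: for every $T$, some ball captures at least a $1/N$-fraction of the time, and since there are only $N$ candidates, one ball works along an increasing unbounded sequence of $T$'s. This not only avoids Prokhorov's theorem entirely but also produces a liminf lower bound $1/N$ that is \emph{uniform} over all geodesics (whereas the constant in the paper's proof depends on the weak-$*$ limit $\mu$, hence on the geodesic). What the paper's argument buys in exchange is the identification of a distinguished ball centered where an invariant-like measure for the trajectory concentrates, which is the more natural viewpoint from the dynamical-systems perspective motivating (GRC), but for the mere statement of Proposition~\ref{weaker} your proof is shorter, cleaner, and self-contained. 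Your closing caveats — that the radius is $\varepsilon$ rather than $<\varepsilon$, and that the liminf is only along a subsequence — correctly pinpoint why this does not imply (GRC), matching the paper's own remark after the proposition.
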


This proposition roughly means that (GRC) is verified up to a subsequence. Although being very natural, it is of no help for providing positive answers to (P1). In some sense, in order to give a positive answer to (P1), we need a ``quantitative'' version of Proposition 3 to hold, where no extraction of a sequence of times is needed. This ``quantitative version'' is provided here by the condition (GRC). Without this condition, it is hard to design the trajectory of the moving domain capturing all speed $1$ geodesics (the proof of Theorem \ref{thmovwave} fails), and sometimes it is even impossible as proved in Theorem \ref{t:neg}.

\subsection{Negative answer for a domain with convex obstacles}\label{s:neg}

It is not true that, for any smooth connected compact Riemannian manifold $(M,g)$ and any parameters $\e,v>0$, problem (P1) has a positive answer: in some domains with convex obstacles, the dynamics is so chaotic that (P1) has a negative answer,  i.e., no sufficiently small ball moving sufficiently slowly can catch all geodesics. Note that geodesics in domains with convex obstacles have already been studied in detail (see for example \cite{morita1991symbolic}).

\begin{theorem} \label{t:neg}
There exists a bounded open subset $X$ of $\mathbb{R}^{2}$ with smooth boundary $\partial X$ (and equipped with the flat metric of $\R^2$) such that for sufficiently small $\e,v>0$, (P1) has a negative answer: for any $T>0$ and any absolutely continuous path $x:[0,T]\rightarrow X$ with $\|\dot{x}\|_\infty\leq v$, the moving domain $\omega(t)$ defined by $\omega(t)=B(x(t),\varepsilon) \cap X$ does not satisfy the $t$-GCC in time $T$. 
\end{theorem}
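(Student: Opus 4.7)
My plan is to construct $X\subset\R^2$ by removing three disjoint, smooth, strictly convex obstacles $\mathcal{O}_1,\mathcal{O}_2,\mathcal{O}_3$ (for instance three congruent disks centered at the vertices of an equilateral triangle) from the interior of a large disk, with the obstacles arranged so as to satisfy Ikawa's ``no-eclipse'' condition: the line segment joining any two obstacles does not meet the third. For such a configuration, Morita~\cite{morita1991symbolic} and Ikawa identify the set $K$ of generalized geodesics of $X$ whose entire orbit stays in the convex hull of the obstacles with a compact hyperbolic invariant subset of the billiard flow, topologically conjugate via the bounce-sequence coding to the two-sided subshift of finite type $\Sigma\subset\{1,2,3\}^\Z$ with forbidden patterns $\{11,22,33\}$. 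In particular $K$ is uncountable, contains the three two-bounce periodic orbits $\pi_{12},\pi_{13},\pi_{23}$ whose traces are line segments $L_{12},L_{13},L_{23}$ pairwise separated by a uniform distance $d>0$, and the billiard flow on $K$ enjoys Bowen's specification property (it is a topologically mixing Axiom A flow).

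The negative answer is then obtained by exhibiting, for $\varepsilon$ and $v$ small enough, a trapped geodesic in $K$ that is never within $\varepsilon$ of the moving ball. The key geometric observation is that for $\varepsilon<d/4$ any point of $X$ is within distance $2\varepsilon$ of at most one of the segments $L_{ij}$, and moving the ball from the $2\varepsilon$-neighbourhood of one such segment to that of another requires time at least $(d-4\varepsilon)/v$. Thus for $v$ small compared to $d$, for any $T>0$ and any $v$-Lipschitz path $x:[0,T]\to X$, partitioning $[0,T]$ into slots $J_k$ of some fixed large length $\tau$ allows one to pick, in each slot, an index $i(k)\in\{12,13,23\}$ such that $L_{i(k)}$ stays at distance $>2\varepsilon$ from $x(t)$ throughout $J_k$. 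Bowen's specification property on $K$ then yields a true orbit $\gamma\in K$ which $\rho$-shadows, for some precision $\rho<\varepsilon/2$, the piece $\pi_{i(k)}|_{J_k}$ on each slot; by the triangle inequality one gets $|\gamma(t)-x(t)|>\varepsilon$ for $t$ in the main portion of every $J_k$.

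The principal obstacle is to control the shadow $\gamma$ during the short transition windows between slots (of length comparable to the specification gluing time $\tau_\rho$), where $\gamma$ is not pinned to any $L_{ij}$ and might a priori be caught by the ball. Overcoming this requires a careful quantitative version of specification with control on the location of the orbit during transitions, combined with the constraint $v\lesssim d/\tau_\rho$, so that the ball cannot during a single transition window move from one ``safe'' region of $X$ to another. Carefully tracking the dependence on $\varepsilon$ and $v$ of the various parameters ($\tau,\tau_\rho,\rho$) and using that the projection of $K$ to $X$ lies strictly inside the convex hull of the obstacles then produces the explicit thresholds $\varepsilon_0,v_0>0$ below which (P1) fails for the domain $X$, completing the proof.
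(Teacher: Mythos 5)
Your domain is the same as the paper's (three disjoint strictly convex obstacles in no-eclipse position inside a large enclosure), but your route from the hyperbolicity of the billiard to the construction of the escaping geodesic is genuinely different from the paper's, and it has the gap you yourself flag.

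The paper does not invoke specification. It works directly with the symbolic conjugacy between the trapped set and the subshift $\Sigma\subset\{1,2,3\}^\Z$, combined with a quantitative estimate from Morita's Theorem~0: the return time $\tau_+$ is Lipschitz in the coding sequence, $|\tau_+(x(\xi))-\tau_+(x(\eta))|\leq C\,d_\rho(\xi,\eta)$ with $\rho=r_0/(1+r_0)$. Summing these along a common prefix (the paper's Corollary~\ref{c:morita}) shows that any two geodesics whose coding sequences agree on the first $n$ symbols hit the $n$-th obstacle at times that differ by at most $3Cr_0$, \emph{uniformly in $n$}. This lets one prescribe the \emph{entire} one-sided bounce sequence of the sought geodesic $\gamma$ and still keep $O(r_0)$ control of the times at which it passes from one zone $Z_i=\conv(\mathcal{C}^j,\mathcal{C}^k)$ to another (Lemma~\ref{l:times}). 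The crucial structural point is that a trapped geodesic lies, on every straight segment between two bounces, entirely inside one of the three zones $Z_1,Z_2,Z_3$; the switching instants are exactly the bounce times on the shared obstacle. There is therefore no ``in-between'' regime: the zones account for all of $\gamma$'s time, so one only has to check that $\omega(t)$ misses $Z_{a_j}$ on the interval $[T_j',T_{j+1}']$, which the slowness hypothesis guarantees provided the switching-time error is uniformly bounded.

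Your specification-based approach has precisely the weakness you identify, and I do not think the fix you sketch closes it. During a gluing window of length $\sim\tau_\rho$, specification gives \emph{no} control on the orbit; since the billiard ball travels at unit speed and the trapped set has diameter of order $1$, in that window $\gamma$ can traverse the whole obstacle cluster and enter the region where the ball sits. Your proposed constraint $v\lesssim d/\tau_\rho$ bounds how far $x(t)$ can move during the window, but says nothing about where $\gamma$ goes; the danger is $\gamma$ moving toward the ball, not the ball moving toward $\gamma$. What is actually needed is control of $\gamma$'s \emph{bounce sequence} throughout, including in the ``transitions'' — which is exactly what the paper's direct construction via $\xi\in\Sigma$, together with Morita's return-time estimate, provides, and what pure specification does not. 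A second, more minor, point: your segments $L_{12},L_{13},L_{23}$ are not pairwise separated by a distance independent of the configuration — near each obstacle $\mathcal{C}^k$ the two segments emanating from it come within $O(r_0)$ of each other — so the quantity $d$ in your argument degenerates as $r_0\to0$; this is harmless for fixed small $r_0$ but must be tracked, whereas the zone-based argument only needs that a small ball cannot meet all three zones at once, which already forces it to sit near one obstacle.
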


Note that the question to determine in full generality the answer to problem (P1), i.e., for an arbitrary smooth connected compact Riemannian manifold $(M,g)$ and for arbitrary parameters $\e,v>0$, is open.

\subsection{Application to the control of the wave equation} \label{s:app}
In this section, we present an application of our results to the control of the wave equation. We first recall a few basic facts on controllability of the wave equation.
\paragraph{Controllability of the wave equation.} The study of controllability properties for the wave equation goes back at least to the work of Russell \cite{russell1971boundary}, \cite{russell1971boundary2}. By exact controllability in time $T>0$ for the wave equation
\begin{equation} \label{waveeq} \partial^{2}_{tt}u-\triangle_g u=\chi_{\omega}f\end{equation}
on a smooth connected compact Riemannian manifold $(M,g)$ with or without boundary controlled in an open subset $\omega\subset M$, we mean that given an initial state $(u_{0},u_{1})$ and a final state $(u_{0}^{F}, u_{1}^{F})$, it is possible to find a control $f$ such that the solution of (\ref{waveeq}) with initial datum $(u_{t=0},\partial_{t}u_{t=0})=(u_{0},u_{1})$ verifies $(u_{t=T},\partial_{t}u_{t=T})=(u_{0}^{F},u_{1}^{F})$. In case of Dirichlet boundary conditions, we take the initial datum $(u_{0},u_{1})$ in the energy space $H_{0}^{1}(M)\times L^{2}(M)$ and we seek $f\in L^{2}((0,T)\times M)$. 

\smallskip

By duality, exact controllability of the wave equation is equivalent to the observability  inequality 
\begin{equation} \label{obsineq}
C(\|u_{|t=0}\|^{2}_{H^{1}(M )}+\|\partial_{t}u_{|t=0}\|^{2}_{L^{2}(M)})\leq \int_{0}^{T} \|\partial_{t}u\|^{2}_{L^{2}(\omega)}dt
\end{equation}
for any solution $u$ of the free wave equation $\partial^{2}_{tt}u-\triangle_g u=0$ where $C>0$ does not depend on $u$ (\cite{lions1988controlabilite}). This last inequality means that it is possible, from the observation of $u$ in the region $\omega$, to recover $u$ in the whole manifold $M$, with an accuracy which is measured by the best possible constant $C$ such that (\ref{obsineq}) is satisfied.

\smallskip

If the open set $\omega$ satisfies the geometric control condition (GCC) in time $T$, which roughly means that all rays of geometric optics in $M$ meet $\omega$ within time $T$, then the results of \cite{rauch1974exponential} and \cite{bardos1992sharp} show that the infimum of all possible times $T$ such that (\ref{obsineq}) is verified coincides with the infimum of the times $T$ such that the geometric control condition is verified in time $T$. The geometric control condition is ``almost" necessary and sufficient (see \cite{humbert2019observability}). The ``almost'' is due to grazing rays, i.e., rays of geometric optics which touch $\partial \omega$ but do not enter $\omega$.

\smallskip

 It is natural to generalize GCC to a time-dependent setting, i.e. for a domain of observation $\omega$ that is allowed to move in time. In other words, the domain of observation is now a measurable subset $Q$ of $(0,T)\times \overline{M}$, which is not necessarily a cylinder $(0,T)\times \omega$ as in the time-independent setting. As will be stated below in a precise manner, if $(Q,T)$ satisfies $t$-GCC, then the wave equation is observable in $Q$.

\smallskip

The search for time-varying observation domains is motivated for instance by seismic exploration, in order to address situations in which all sensors cannot be active at the same time. In many practical examples, it is also possible to move sensors in order to get better precision in the inverse problems which arise while trying to recover data. 

\smallskip

There is not much literature about observation of the wave equation on moving domains. The first paper to address this question (in one dimension of space) seems to be \cite{khapalov1995controllability}. More recently, in \cite{le2017geometric} the authors have proved that the $t$-GCC condition is an almost necessary and sufficient condition for controllability of $n$-dimensional waves by a moving domain.  The question we address in this paper was raised as an open problem in \cite[Section 3E]{le2017geometric}. In \cite{castro2014controllability} the authors proved the same result as in \cite{le2017geometric} for the one-dimensional wave equation, and then characterized the minimal norm controls. Finally, the paper \cite{castro2013exact} gives sufficient conditions on the trajectory of a moving interior point of an interval to ensure controllability of the wave equation.

\smallskip

We now give precise definitions and recall the result which will be used in the sequel.

\paragraph{Setting.} \label{setting} We adopt the same setting as in \cite{le2017geometric}. We recall it here for the sake of completeness. Let $(M,g)$ be a smooth connected compact $n$-dimensional Riemannian manifold with $n\geq 1$. We consider the wave equation 
\begin{equation} \label{eqondes}
\partial_{tt}^{2}u-\triangle_{g}u=0
\end{equation}
in $\mathbb{R}\times M$, where $\triangle_{g}$ denotes the Laplace-Beltrami operator on $(M,g)$. If the boundary $\partial M$ of $M$ is nonempty, then we consider boundary conditions of the form
\begin{equation} \label{condbord}
Bu=0 \ \ \ \text{on $\mathbb{R}\times\partial M$}
\end{equation}
where the operator $B$ is either
\begin{itemize}
\item the Dirichlet trace operator, $Bu=u_{|\partial M}$;
\item or the Neumann trace operator, $Bu=\partial_{n}u_{|\partial M}$, where $\partial_{n}$ is the outward normal derivative along $\partial M$.
\end{itemize}

In the case of a manifold without boundary or in the case of homogeneous Neumann boundary conditions, the Laplace-Beltrami operator is not invertible on $L^{2}(M )$ but is invertible in 
\[ L^{2}_{0}(M)=\left\{ u\in L^{2}(M ) \ \big{|} \ \int_{M}u(x)dx_{g}=0\right\}.\]

In what follows, we set $\mathcal{H}=L_{0}^{2}(M)$ in the boundaryless case or in the Neumann case, and $\mathcal{H}=L^{2}(M)$ in the Dirichlet case (in both cases, the norm on $\mathcal{H}$ is the usual $L^{2}$-norm). We denote by $A=-\triangle_{g}$ the operator defined on the domain
\[ D(A)=\{ u\in \mathcal{H} \ | \ Au\in X \text{ and } Bu=0 \}\]
with one of the above boundary conditions whenever $\partial M\neq \emptyset$. We refer to \cite{le2017geometric} for an explicit description of $D(A)$, $D(A^{1/2})$ and of $D(A^{1/2})'$. 

\smallskip

For all $(u^{0},u^{1})\in D(A^{1/2})\times \mathcal{H}$, there exists a unique solution $u\in C^{0}(\mathbb{R} ; D(A^{1/2}))\cap C^1(\mathbb{R};\mathcal{H})$ of (\ref{eqondes})-(\ref{condbord}) such that $u_{|t=0}=u^{0}$ and $\partial_{t}u_{|t=0}=u^{1}$. Such solutions of (\ref{eqondes})-(\ref{condbord}) are understood in the weak sense. 

\smallskip

Let $Q$ be an open subset of $\mathbb{R}\times\overline{M}$. We set 
\[ \omega(t)=\{ x\in \overline{M} \ | \ (t,x)\in Q\}.\]
Let $T>0$ be arbitrary. We say that (\ref{eqondes})-(\ref{condbord}) is observable on $Q$ in time $T$ if there exists $C>0$ such that 
\begin{equation}\label{obsineq3}
C\|(u_{|t=0},\partial_{t}u_{|t=0})\|^{2}_{D(A^{1/2})\times \mathcal{H}}\leq \int_{0}^{T}\int_{\omega(t)}|\partial_{t}u(t,x)|^{2}dx_{g}dt
\end{equation}
for any solution $u$ of (\ref{eqondes})-(\ref{condbord}).

\smallskip

The main theorem of \cite{le2017geometric} states:
\begin{quote}
{\it Let $Q$ be an open subset of $\mathbb{R}\times \overline{M}$ that satisfies the $t$-GCC. Let $T \in (T_{0}(Q),+\infty)$. When $\partial M \neq \emptyset$, we assume moreover that no generalized bicharacteristic has a contact of infinite order with $(0,T)\times \partial M$, that is, $G^{\infty}=\emptyset$. Then the observability inequality (\ref{obsineq3}) holds, and therefore the wave equation is controllable in time $T$ with control domain $Q$.}
\end{quote}

As a corollary of Theorem \ref{thmovwave}, we have:
\begin{theorem} \label{t:app}
If $(M,g)$ satisfies (GRC), then there for any $\e>0$ and $v>0$, there exist $T>0$ and an absolutely continuous path $x:[0,T]\rightarrow M, t\mapsto x(t)$ satisfying $\|\dot{x}\|_\infty\leq v$ such that the wave equation in $M$ is controllable in time $T$ with moving control domain $\omega(t)=B_g(x(t),\e)$.
\end{theorem}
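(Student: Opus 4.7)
The plan is to obtain Theorem \ref{t:app} as a direct combination of Theorem \ref{thmovwave} with the main theorem of \cite{le2017geometric} quoted in the excerpt. First, given $\e>0$ and $v>0$, and since (GRC) holds for every generalized geodesic of $(M,g)$, I would invoke Theorem \ref{thmovwave} to produce a time $T_1>0$ and an absolutely continuous path $x_1:[0,T_1]\to M$ with $\|\dot{x}_1\|_\infty\leq v$ such that $\omega_1(t)=B_g(x_1(t),\e)$ together with $T_1$ satisfies the $t$-GCC. The associated set $Q_1=\{(t,x)\in(0,T_1)\times\overline{M}\mid x\in\omega_1(t)\}$ is open in $\R\times\overline{M}$ since $x_1$ is continuous and the balls are open in $(M,g)$.

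The main theorem of \cite{le2017geometric} requires the strict inequality $T>T_0(Q)$, so I would extend $x_1$ to a path $x:[0,T_1+\delta]\to M$ for some small $\delta>0$, for instance by declaring $x(t)=x_1(T_1)$ on $[T_1,T_1+\delta]$; this preserves the bound $\|\dot{x}\|_\infty\leq v$ and ensures that the corresponding enlarged cylinder $Q$ still satisfies the $t$-GCC, now strictly in time $T=T_1+\delta>T_0(Q)$. Applying the theorem of \cite{le2017geometric} to $(Q,T)$ then yields the observability inequality (\ref{obsineq3}), and the classical HUM duality of \cite{lions1988controlabilite} converts it into exact controllability of (\ref{waveeq}) in time $T$ with moving control domain $\omega(t)=B_g(x(t),\e)$.

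There is essentially no obstacle here beyond bookkeeping: Theorem \ref{thmovwave} does all the geometric work, and the analytic implication is a direct citation of an already-quoted result. The only caveats are the technical strict-inequality fix above and, when $\partial M\neq\emptyset$, the implicit regularity assumption $G^\infty=\emptyset$ (no generalized bicharacteristic has contact of infinite order with $(0,T)\times\partial M$) required by the observability theorem of \cite{le2017geometric}, which should be listed among the hypotheses whenever boundary is present.
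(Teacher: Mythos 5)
Your proof is correct and follows exactly the route the paper intends: the paper simply states Theorem~\ref{t:app} ``as a corollary of Theorem~\ref{thmovwave}'' without writing out the combination with the quoted observability result of \cite{le2017geometric}, which is precisely what you spell out. Your two added caveats --- extending the path by a short constant tail to guarantee the strict inequality $T>T_0(Q)$, and flagging the implicit $G^\infty=\emptyset$ hypothesis in the boundary case --- are genuine technical points that the paper glosses over and that make the deduction airtight.
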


Let us describe shortly an application of Theorem \ref{t:app} to the 2D torus $M = \mathbb{T}^{2}$ with the flat metric on it. It is an example of a manifold such that the wave equation is not observable on any static ball of sufficiently small radius $\varepsilon$: for any sufficiently small ball, there is a periodic geodesic which does not meet this ball, and thus GCC is not verified. Constructing Gaussian beams along this geodesic (\cite{ralston1982gaussian}), we see that the wave equation is not observable on such a small ball. Since $\mathbb{T}^2$ satisfies (GRC) (by application of Proposition \ref{dicho}), Theorem \ref{t:app} however guarantees that if we allow the small ball to move (even with an arbitrarily small speed) in $\mathbb{T}^{2}$, it is always possible to make the wave equation observable on this moving domain.

\smallskip

In the proof of Theorem \ref{t:neg}, we show that for any small ball moving slowly over a time interval $[0,T]$, there exists a generalized geodesic which stays at positive distance of this moving ball during the time interval $[0,T]$. Therefore, using the construction of Gaussian beams along this geodesic (\cite{ralston1982gaussian}), we get:
\begin{theorem} \label{t:noobs}
On the domain $X$ (with the flat metric of $\R^2$) of Theorem \ref{t:neg}, for any $\e>0$, $v>0$, $T>0$ and any absolutely continuous path $x:[0,T]\rightarrow M, t\mapsto x(t)$ such that $\|\dot{x}\|_\infty\leq v$, the wave equation in $M$ is not controllable in time $T$ with control domain $\omega(t)=B(x(t),\e)$.
\end{theorem}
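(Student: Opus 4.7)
The plan is to exploit the duality between controllability and observability, and to contradict the observability inequality \eqref{obsineq3} using concentrated high-frequency solutions (Gaussian beams) along a geodesic that avoids the moving ball.

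First, I would invoke the content of the proof of Theorem~\ref{t:neg} (not just its statement) in the regime where $\varepsilon$ and $v$ are small enough: it produces, for the given path $x(\cdot)$ with $\|\dot x\|_\infty\leq v$, a generalized geodesic $\gamma:[0,T]\to X$ traveled at unit speed which stays at positive distance from the moving ball, i.e.\ there exists $\delta>0$ such that $\mathrm{dist}(\gamma(t),x(t))\geq \varepsilon+\delta$ for every $t\in[0,T]$. (For $\varepsilon,v$ not in the small regime of Theorem~\ref{t:neg}, the statement is meant in the same ``for sufficiently small parameters'' sense, since enlarging the control domain can only make controllability easier.)

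Next, I would apply Ralston's Gaussian beam construction \cite{ralston1982gaussian} along the generalized geodesic $\gamma$ (with reflections on $\partial X$, which is smooth). This yields a sequence $(u_k)_{k\in\mathbb{N}}$ of approximate solutions of the free wave equation \eqref{eqondes}--\eqref{condbord} of the form
\[
u_k(t,y)=A_k(t,y)\,e^{i k\varphi(t,y)},
\]
where $\varphi$ has nonnegative imaginary part vanishing exactly along $\gamma$ with positive definite transverse Hessian, and $A_k$ is supported in a tubular neighborhood of $\gamma$. Correcting $u_k$ into an exact solution $\tilde u_k$ of \eqref{eqondes}--\eqref{condbord} by solving the wave equation with the appropriate Cauchy data at $t=0$ (and absorbing the remainder by standard energy estimates, as in \cite{ralston1982gaussian,bardos1992sharp,le2017geometric}), one obtains solutions whose energy $\|(\tilde u_k|_{t=0},\partial_t\tilde u_k|_{t=0})\|_{D(A^{1/2})\times\mathcal{H}}^2$ is bounded below uniformly in $k$.

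Then I would estimate the right-hand side of \eqref{obsineq3}. Because $\omega(t)=B(x(t),\varepsilon)$ remains at distance at least $\delta>0$ from the trace of $\gamma$ on $[0,T]$, and $|u_k|$ decays like $e^{-ck\delta^2}$ outside any fixed tubular neighborhood of $\gamma$ thanks to the positivity of $\mathrm{Im}\,\varphi$ transverse to $\gamma$, we get
\[
\int_0^T\int_{\omega(t)}|\partial_t\tilde u_k(t,y)|^2\,dx_g\,dt \;\xrightarrow[k\to\infty]{}\;0,
\]
while the left-hand side of \eqref{obsineq3} stays bounded away from zero. Hence the observability inequality fails for every $C>0$, and by the classical HUM duality (see \cite{lions1988controlabilite}) the wave equation is not controllable in time $T$ with control domain $\omega(t)=B(x(t),\varepsilon)$.

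The main technical obstacle is producing the exact geodesic-avoidance property for $x(\cdot)$ absolutely continuous with $\|\dot x\|_\infty\leq v$: this is exactly what the proof of Theorem~\ref{t:neg} delivers, so the work here reduces to extracting that output cleanly and verifying that the uniform positive distance $\delta$ is enough to make the Gaussian beam tails negligible. Handling reflections of the Gaussian beam at $\partial X$ is routine since $\partial X$ is smooth and no grazing arises for a generic avoiding geodesic (if needed, one can perturb $\gamma$ slightly within the family of avoiding geodesics to stay away from $G^\infty$).
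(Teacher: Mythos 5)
Your proposal is correct and follows essentially the same route as the paper: the paper dispatches Theorem~\ref{t:noobs} in a single sentence by taking the generalized geodesic produced in the proof of Theorem~\ref{t:neg} that stays at positive distance from $\omega(t)$ on $[0,T]$, constructing Gaussian beams along it as in \cite{ralston1982gaussian}, and concluding via the controllability--observability duality \cite{lions1988controlabilite}, which is exactly the argument you spell out in more detail. Your remark that the ``for any $\varepsilon,v$'' phrasing must be read in the small-parameter regime of Theorem~\ref{t:neg} also reflects the paper's actual intent.
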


In relation with the domain $X$ exhibited in Theorem \ref{t:neg}, note that the wave equation in manifolds with convex obstacles has already been studied, although not for the same purpose. See for example \cite{ikawa1982decay}, where the trapped geodesics are used to establish some decay estimates for the energy of the free wave equation, \cite{burq2004geometric} for resolvent estimates, and \cite{joly2019decay} for a recent use in the context of the damped wave equation.

\section{Proofs} \label{proofs}

\subsection{Proof of Theorem \ref{thmovwave}}
Let $\varepsilon >0$ and $v>0$. We consider a dense sequence of points $(x_{i})_{i\in\mathbb{N}^{*}}$ in $M$, we set $B_{i}=B_g(x_{i},\varepsilon )$ for $i\in \mathbb{N}^{*}$. Then we construct $x:\mathbb{R}^{+}\rightarrow M$ a map of speed $|\dot{x}(t)|\leq v$ in the following way. The point $x(t)$ will successively stay on the points $x_{i}$ in the following order (we will precise the time it stays on each point later): \begin{displaymath} x_{1}, x_{2}, x_{1}, x_{2}, x_{3}, x_{4}, x_{1}, x_{2}, ..., x_{8}, x_{1}, x_{2}, ..., x_{16}, ..., x_{1} , ..., x_{2^n}, x_{1}, ...\end{displaymath} and this sequence continues until infinity. We call each of these positions a ``step": for example, at step 1, $x(t)=x_{1}$, at step 2, $x(t)=x_{2}$, at step 3, $x(t)=x_{1}$, etc. Of course, between two steps, there is a smooth transition: the point $x(t)$ goes from one $x_{i}$ to the following smoothly. Then we have to specify how much time $x(t)$ stays on each $x_{j}$ (i.e., the time duration of each step): we require that if $x(t)$ arrives at step $j$ at time $t$, then step $j$ lasts $t2^{j}$ seconds (much more time than all the time already passed). Lastly, we take $\omega(t)=B_g(x(t),\varepsilon)$. This construction implies that for each $i\in\mathbb{N}^{*}$, the following assertion is true:
\begin{quote}\textbf{(B) :} For every constant $0<K<1$ and every $T>0$, there exist $T''>T' >T$ such that $\frac{T''-T'}{T''}>K$ and $x(t)=x_{i}$ for $t\in [T',T'']$.\end{quote}
Now consider a geodesic $t\mapsto y(t)$ in $M$. We will show that there exists $t\geq 0$ such that $y(t)\in \omega(t)$. Let $U$ be an open ball of radius $<\e$ satisfying (GRC) for the trajectory $t\mapsto y(t)$. Since $(x_{i})_{i\in\mathbb{N}^{*}}$ is dense in $M$, there exists $j\in \mathbb{N}^{*}$ such that $U \subset B_{j}=B_g(x_{j},\varepsilon )$ with the above notations. Note that (GRC) implies that 
\begin{equation*}  \underset{T\rightarrow +\infty}{\text{liminf }} \frac{|\{t\in [0,T] \mid y(t)\in B_{j}\}|}{T}\geq 3C >0\end{equation*}
for some $C>0$. 
This means that the trajectory $y(t)$ spends at least a fraction $3C$ of time in $B_{j}$ when time goes to infinity. Let $T_1$ be such that
\begin{equation} \label{fractionoftime} \forall t\geq T_1, \quad  \frac{|\{s\in [0,t] \mid y(s)\in B_{j}\}|}{t}\geq 2C. \end{equation}
By assertion (B) for $K=1-C$, there exist $T''>T'>T_1$ such that 
\begin{equation} \label{1er} \text{$(T''-T')/T''>1-C$ \quad and \quad $x(t)=x_{j}$ \quad for $t\in [T',T'']$.} \end{equation}
 If we take $t=T''$ in (\ref{fractionoftime}), we get
\begin{equation} \label{2eme} \frac{|\{s\in [0,T''] \mid y(s)\in B_{j}=B_g(x_{j},\varepsilon )\}|}{T''}\geq 2C.\end{equation}
Combining (\ref{1er}) et (\ref{2eme}) we see that there exists a time $t\leq T''$ such that $y(t)\in \omega(t)$ (and at this time $t$, $\omega(t)=B_j$). Therefore, any trajectory meets $\omega(t)$ when time goes to infinity. 

\smallskip

The function which associates to a point in $S^*M$ (and, by extension, to the bicharacteristic $(y(t),\eta(t))$ starting from this point at time $0$) the infimum of all $t\in \R^+$ for which $y(t)\in\omega(t)$ is upper semi-continuous. Since $S^*M$ is compact, it has a maximum. It follows that there exists $0<T<+\infty$ such that $\omega(t)$ satisfies the $t$-GCC in time $T$.

\subsection{Proof of Proposition \ref{dicho}}
We first note that all closed geodesics in $M$ satisfy (GRC). To see it, just fix a periodic geodesic $t\mapsto y(t)$, let $T_{1}>0$ be its minimal period and set $U=B(y(0),\varepsilon /2)\cap M$. Then the liminf appearing in (\ref{condliminf}) is greater than or equal to $1/T_{1}$ and hence is positive. Secondly, for the uniformly distributed geodesics, any open set $U\subset M$ can be used in (GRC). This concludes the proof of Proposition \ref{dicho}.

\subsection{Proof of Proposition \ref{disk}}
Let $D$ be an open two-dimensional disk. If we take a sufficiently short chord of the disk intercepting an angle $\alpha$ which is not commensurable to $\pi$, then the geodesic which follows this chord is not periodic (since $\frac{\alpha}{\pi}\notin \mathbb{Q}$) and it is not uniformly distributed (since it does not meet a small disk with the same center as $D$ if the chord is sufficiently short). 

\smallskip

However, it is possible to verify that any geodesic of $D$ is either periodic or there exists $0<r<1$ (depending on the geodesic) such that any open subset $U$ of the annulus $D\backslash rD$ satisfies (GRC). To see it, take a geodesic $t\mapsto y(t)$, $t\geq 0$ in $D$ which is not periodic. Its trace on $\partial D$ is a sequence of points $x_{1},x_{2},...$ which are always separated by the same distance. If one sees $\partial D$ as the quotient $\mathbb{R}/\pi\mathbb{Z}$, then the points $x_{i}$ form an arithmetic sequence which is dense in $\mathbb{R}/\pi\mathbb{Z}$. Moreover, since the oriented angle $\alpha$ made by the forward trajectory $t\mapsto y(t)$ at each $x_{i}$ with the tangent to $\partial D$ is always the same, it is straightforward to see that there exists $0<r<1$ such that $M '=\overline{D}\backslash rD$ is the closure of the trajectory $t\mapsto y(t)$. This oriented angle $\alpha$ also determines, for each point $x\in D$, a unique point $\pi_{\alpha}(x)$ on $\partial D$ which is the only point for which the segment with ends $\pi_{\alpha}(x)$ and $x$ makes an oriented angle $\alpha$ with the tangent at $\pi_{\alpha}(x)$. Let $U$ be a small ball in $M '$ and let $\pi_{\alpha}^{-1}(U)$ be its preimage by $\pi_{\alpha}$. It is an interval on the boundary $\partial D$. It means that if $x_{i}\in \pi_{\alpha}^{-1}(U)$, the trajectory $t\mapsto y(t)$ falls in $U$ while running from $x_{i}$ to $x_{i+1}$. Proposition \ref{disk} then follows from the uniform distribution of the sequence $(x_{i})_{i\in\mathbb{N}^*}$ in $\partial D$, since $\frac{\alpha}{\pi}\notin \mathbb{Q}$.

\subsection{Proof of Proposition \ref{weaker}}
Let $t\mapsto y(t)$ be a (generalized) geodesic in $M$ (or in $\overline{M}$ if $M$ has a boundary). For the sake of simplicity, we assume in what follows that $M$ has no boundary, but the proof also works in the case with boundary. We set, for $n\in\mathbb{N}$, 
\[ \mu_{n}=\frac{1}{n}\sum_{i=1}^{n}\delta_{y(i)}\]
where $\delta_{x}$ is the Dirac measure in $M$ located at $x$. The measure $\mu_{n}$ is a probability measure on $M$. By Prokhorov's theorem, there exists a subsequence $(n_{k})_{k\in\mathbb{N}}$ and a probability measure $\mu$ on $M$ such that $\mu_{n_{k}}\rightharpoonup \mu$ in the weak-* topology of measures. There exists $M_{1}$ an open ball of radius $\varepsilon/4$ such that $\mu (M_{1})>0$. We denote by $M_{2}$ the ball of radius $\varepsilon/2$ with the same center as $M_{1}$. Let $f:M\rightarrow [0,1]$ be a continuous function equal to $1$ on $M_{1}$ and to $0$ outside of $M_{2}$. Then
\[ \int_{M}fd\mu_{n_{k}}\rightarrow \int_{M}fd\mu >0\]
as $k\rightarrow +\infty$. But $ \mu_{n_{k}}(M_{2})\geq \int_{M_{2}}fd\mu_{n_{k}}=  \int_{M}fd\mu_{n_{k}}$ since $f$ vanishes outside of $M_{2}$. Therefore
\begin{equation}\label{liminf2}\underset{k\rightarrow +\infty}{\liminf} \ \mu_{n_{k}}(M_{2}) >0.\end{equation}
Note that if $y(i)\in M_{2}$, then $y(t)\in B$ for any $t$ satisfying $|t-i|\leq \varepsilon/2$, where $B$ denotes the ball with the same center as $M_{1}$ and $M_{2}$ and with radius $\e$. Combining this remark with (\ref{liminf2}), we get 
\begin{equation*} \liminf_{k\rightarrow +\infty}   \frac{|\{t\in [0,n_{k}] \mid y(t)\in B\}|}{n_{k}}>0,\end{equation*}
which finishes the proof of Proposition \ref{weaker}.

\subsection{Proof of Theorem \ref{t:neg}}

We consider three circles $\mathcal{C}^{1}, \mathcal{C}^{2}$ and $\mathcal{C}^{3}$ in the Euclidean plane, with same radius $r_0>0$ (with $r_0\ll 1$), whose centers form an equilateral triangle $\Delta$ of side-length $1+2r_0$. These circles will be part of the boundary $\partial X$ of the domain $X$. More precisely, we set $\mathcal{C}=\mathcal{C}^{1}\cup \mathcal{C}^{2}\cup\mathcal{C}^{3}$ and we take $X$ to be a domain with smooth boundary $\partial X=\mathcal{C}\cup \Gamma$ where $\Gamma$ is a smooth curve which encloses the convex hull of the three circles. The set $X$ is therefore connected but $\partial X$ has four connected components. The precise form of $\Gamma$ does not matter since the geodesic trajectories we will construct only bounce on the obstacles $\mathcal{C}^{1}$, $ \mathcal{C}^{2}$ and $\mathcal{C}^{3}$. Our notations are summarized in Figure \ref{f:domain}.

\smallskip

Note that with our choice of side-length for $\Delta$, between any two of the three circles $\mathcal{C}^{1},\mathcal{C}^{2},\mathcal{C}^{3}$, there is a unique trajectory of length $1$ joining these two circles (which is also the trace of the periodic trajectory bouncing between the two circles).

\begin{figure}[!h] 
\begin{center}
\includegraphics[width=9cm]{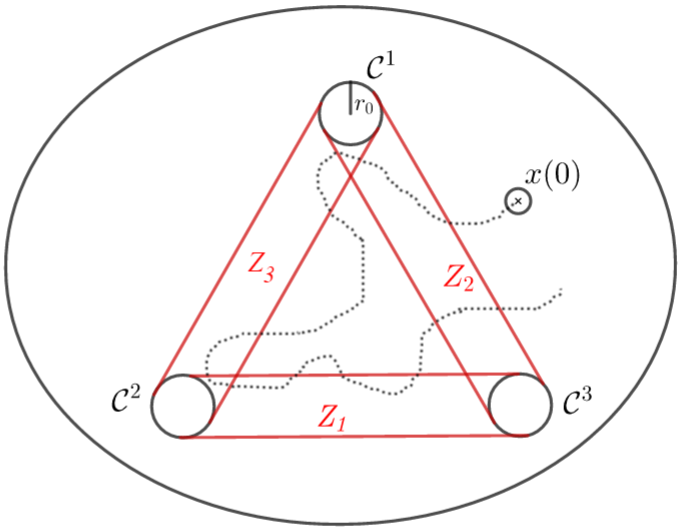}
\caption{\textit{The domain $X$. The trajectory $t\mapsto x(t)$ is the dashed curve.}} 
\label{f:domain}
\end{center}
\end{figure}

We will show that for $X$ as above with $r_0$ sufficiently small, there exist $\e,v>0$ such that for any $T>0$ and any absolutely continuous path $x:[0,T]\rightarrow X$ with $\|\dot{x}\|_\infty\leq v$, the moving domain $\omega(t)$ defined by $\omega(t)=B(x(t),\varepsilon) \cap X$ does not satisfy the $t$-GCC in time $T$. 

\smallskip

Therefore, we fix small enough positive parameters $r_0,v,\e\ll1$, a time $T>0$ and an absolutely continuous path $x:[0,T]\rightarrow X$ with $\|\dot{x}\|_\infty\leq v$. Finally we set $\omega(t)=B(x(t),\varepsilon) \cap X$. Our goal is to construct a generalized geodesic $\gamma$ of $X$ (with the flat metric)  such that for any $t\in [0,T]$, $\gamma(t)\notin \omega(t)$.

\smallskip

The geodesic flow in $X$ is chaotic. For example, the following fact is well known:

\begin{fact} \label{coding}
For any sequence $(\xi_{n})\in \{1,2,3\}^{\mathbb{N}}$ such that $\xi_j\neq \xi_{j+1}$ for any $j\in\mathbb{N}$, there exists a geodesic which bounces successively on circles $\mathcal{C}^{\xi_{0}},\mathcal{C}^{\xi_{1}},\ldots,\mathcal{C}^{\xi_{n}},\ldots$ 
\end{fact}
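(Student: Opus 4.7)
The plan is to use a variational construction for finite admissible words and then pass to the limit by compactness — this is the standard approach for symbolic coding of open billiards with convex obstacles satisfying the no-eclipse condition, as developed in \cite{morita1991symbolic,ikawa1982decay}.

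First I would handle finite words. Fix $N \geq 1$ and an admissible block $(\xi_0, \ldots, \xi_N)$ (with $\xi_j \neq \xi_{j+1}$), and consider the length functional
\[ L_N(p_0, \ldots, p_N) = \sum_{j=0}^{N-1} |p_{j+1} - p_j| \]
on the compact product $\mathcal{C}^{\xi_0} \times \cdots \times \mathcal{C}^{\xi_N}$. A minimizer $(p_0^N, \ldots, p_N^N)$ exists by continuity and compactness. At each interior index $1 \leq j \leq N-1$, the first-order optimality condition states that the tangential component of $\nabla_{p_j}(|p_{j-1} - p_j| + |p_j - p_{j+1}|)$ vanishes along $\mathcal{C}^{\xi_j}$, which is exactly Snell's reflection law with respect to the normal to $\mathcal{C}^{\xi_j}$ at $p_j^N$. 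Hence the broken segment joining the $p_j^N$'s is a genuine billiard geodesic of $X$, provided we also verify that its segments do not clip an unlisted obstacle and that the bounces are transversal rather than grazing.

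This verification is where the smallness of $r_0$ relative to the side-length $1 + 2r_0$ of $\Delta$ is used. An elementary computation shows that, for $r_0$ small enough, the no-eclipse condition holds: the convex hull of any two of the three circles is disjoint from the third. A standard convexity argument then shows that the minimizer can only bounce on the convex side of each circle facing the adjacent bounce points, that no segment clips an unlisted obstacle, and, crucially, that the angles of incidence admit a uniform positive lower bound depending only on the geometry (not on $N$ nor on the particular word). To pass to the infinite word, one observes that $\gamma_N$ is completely determined by $(p_0^N, v_0^N) \in \mathcal{C}^{\xi_0} \times S^1$, where $v_0^N$ is the outgoing direction from $p_0^N$; extracting a subsequence $N_k \to \infty$ along which $(p_0^{N_k}, v_0^{N_k}) \to (p_0^\infty, v_0^\infty)$, and invoking continuity of the billiard flow in initial conditions away from grazing (justified by the uniform transversality just obtained), one checks that the trajectory $\gamma$ starting from $(p_0^\infty, v_0^\infty)$ has its $n$-th reflection on $\mathcal{C}^{\xi_n}$ for every $n \geq 0$.

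The main obstacle is the uniform transversality of the finite minimizers: without positive lower bounds on the angles of incidence independent of $N$, the limiting trajectory could develop a tangential collision, at which the flow is discontinuous, and the prescribed itinerary could fail to pass to the limit. The no-eclipse condition, automatic here for $r_0 \ll 1$, is precisely the geometric hypothesis that rules this out and underlies the classical symbolic dynamics of dispersing billiards.
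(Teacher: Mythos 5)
Your proposal is correct, but it is \emph{not} the proof given in the paper: it is essentially the variational argument of Morita and Ikawa, which the paper deliberately sidesteps. You minimize the broken length functional on $\mathcal{C}^{\xi_0}\times\cdots\times\mathcal{C}^{\xi_N}$, use first-order optimality to recover the reflection law, invoke the no-eclipse condition (satisfied here for $r_0$ small relative to the side-length $1+2r_0$) to get uniform transversality of the bounces and exclusion of unwanted clipping, and then pass to the limit $N\to\infty$ by compactness of $(p_0^N,v_0^N)$ together with continuity of the billiard flow away from grazing. That is a complete and standard route, and it buys more than the statement asks for (uniqueness of the coded point, and the Lipschitz-type stability estimate with respect to the symbolic metric that the paper itself imports later as \eqref{e:ineqtau}).

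The paper's Appendix~\ref{a:fact} instead gives a purely topological, non-variational proof: fix a base point $A$ in the convex hull of $\Delta$, parametrize geodesics from $A$ by their initial angle $\eta\in[0,2\pi)$, and build a decreasing chain of nonempty closed intervals $A_{\xi_0\cdots\xi_i}$ of angles whose geodesics realize the prefix $\xi_0,\ldots,\xi_i$; nonemptiness at each step follows from continuity of the flow together with the observation that the two extreme geodesics of $A_{\xi_0\cdots\xi_i}$ hit $\mathcal{C}^{\xi_i}$ tangentially on opposite sides, so that the outgoing beam sweeps across $\mathcal{C}^{\xi_{i+1}}$. The intersection of the nested intervals is then nonempty, and any angle in it gives the desired geodesic. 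This argument requires no calculus of variations and no a priori transversality estimate (tangencies are used constructively rather than avoided), at the cost of yielding only existence and not the metric estimates. Its interest, as the paper notes, is that the nested intervals $A_{\xi_0\cdots\xi_i}$ exhibit the Cantor/horseshoe structure of the billiard explicitly, which the variational proof leaves implicit. Both approaches are valid; yours is the stronger workhorse, the paper's is the more elementary and more geometrically transparent.
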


A proof is given for example in \cite[Theorem 0]{morita1991symbolic}, and we also recall in Appendix \ref{a:fact} an elementary proof which explicitly shows the Cantor structure underlying this billiard flow.

\smallskip

Here we use Fact \ref{coding} to prescribe the behaviour of a geodesic not by giving the successive obstacles on which it bounces but by prescribing at any time $t\in [0,T]$ the ``\textit{approximate}'' zone of $X$ where the geodesic is at time $t$. Our proof of Theorem \ref{t:neg} relies on Fact \ref{coding}, but using only this fact without any mention to the position of $\gamma$ at any time would not be sufficient for our purpose: a priori, the statement of Fact \ref{coding} gives no indication about the times at which the geodesic hits the obstacles, but this is precisely the information that we need for the proof of Theorem \ref{t:neg}. Fact \ref{coding} together with the stability estimate of \cite[Theorem 0]{morita1991symbolic} will be the key ingredients of our proof.

\smallskip

Let us give a sketch of the proof:
\begin{itemize}
\item We define three zones $Z_1, Z_2, Z_3$, which are the pairwise convex hulls of the three circles $\mathcal{C}^1, \mathcal{C}^2, \mathcal{C}^3$ (see Figures \ref{f:domain} and \ref{f:split}). Note that any geodesic of $X$ which bounces only on $\mathcal{C}^1,\mathcal{C}^2$ and $\mathcal{C}^3$ (i.e., which never touches $\Gamma$) remains in $Z_1\cup Z_2\cup Z_3$.
\item In order construct a geodesic $\gamma$ never meeting $\omega$, we associate to each time $t\in [0,T]$ the indices of the zones $Z_1, Z_2, Z_3$ intersecting $\omega(t)$ (which are ``prohibited zones for $\gamma(t)$). This gives a function $f:[0,T]\rightarrow \mathcal{P}(\{1,2,3\})$ (the set of all subsets of $\{1,2,3\}$) and our goal is to construct $\gamma$ such that for any $t\in [0,T]$, $\gamma(t)\notin  \bigcup_{a\in f(t)} Z_{a}$. This means that the interval $[0,T]$ is split as in Figure \ref{f:split}, where the indices of the prohibited zones are written above the segment $[0,T]$. 
\item if it were possible, it would be sufficient to prescribe the times at which $\gamma$ passes from a zone to another, while avoiding the prohibited zones given by $f$ (just prescribing a certain number of bounces in each zone). But it is not easy and maybe even not feasible because any two successive bounces on the obstacles $\mathcal{C}^1, \mathcal{C}^2, \mathcal{C}^3$ are separated by a time almost equal to $1$ but not exactly $1$, and this small discrepancy heavily depends on the next zones that $\gamma$ will visit. If we prescribe the successive number of bounces of $\gamma$ in each zone, small discrepancies may accumulate, and it then becomes clear that we have to choose the number of bounces in each zone so as to compensate these accumulated discrepancies. A way to do this is to use the stability estimate \cite[Theorem 0]{morita1991symbolic}.
\item Let us consider again Figure \ref{f:split}. First, we construct a sequence of times (and also a sequence of associated zones) at which we would like ``ideally'' for $\gamma$ to switch from one zone to another: they are denoted by $0=T_0<T_1<\ldots<T_{n-1}<T$ in Lemma \ref{l:times} and in Figure \ref{f:split}. The difficulty is that there does not necessarily exist any geodesic $\gamma$ lying in the prescribed zone on any time interval $[T_j,T_{j+1}]$. Now, we note that since $\|\dot{x}\|_\infty\leq v$, it takes a time greater than $1/(2v)$ (which is large since $v\ll1$) for $\omega(t)$ to visit all zones $Z_1, Z_2$ and $Z_3$. Therefore, these switching times may be chosen quite flexibly. Thanks to this flexibility, we construct a geodesic $\gamma$ whose ``switching times'' $0=T_0'<T_1'<\ldots<T_n'=T$ are a perturbation of the sequence $(T_j)_{0\leq j\leq n-1}$ (i.e., $|T_j'-T_j|$ is small for any $j$). By construction, $\gamma$ does not meet $\omega(t)$ during the time interval $[0,T]$.
\end{itemize}

\begin{figure}[!h] 
\begin{center}
\includegraphics[width=14cm]{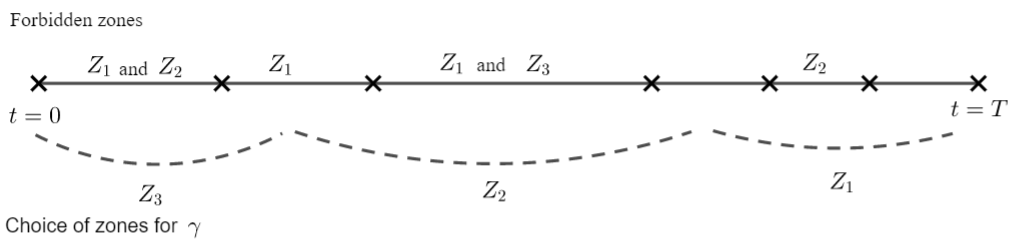} \caption{\textit{The interval $[0,T]$}} 
\label{f:split}
\end{center}
\end{figure}

We now give the full proof of Theorem \ref{t:neg}.

\smallskip

Let us introduce a few notations borrowed from \cite{morita1991symbolic} (we keep these notations in order to facilitate reading). We denote by $SX=X\times \mathbb{S}^1$ the unit tangent bundle of $X$ and by $\pi:SX\rightarrow X$ the canonical projection. We denote by $S_t$ the billiard flow on $SX$. We choose a point $q_j\in \mathcal{C}^j$ for $j=1,2,3$ and we define the following quantities for $x=(q,v)$ with $q\in \mathcal{C}= \mathcal{C}^1\cup  \mathcal{C}^2\cup \mathcal{C}^3$:
\begin{align} 
\xi_0(x)=&\ j \text{ if } q\in \mathcal{C}^j; \nonumber \\
r(x)=&\text{ the arclength between } q_{\xi_0(x)} \text{ and } q, \nonumber \\
&\text{ measured clockwise along the curve } \mathcal{C}^{\xi_0(x)}; \nonumber \\
\phi(x)= &\text{ the angle between the vector $v$ and the unit innernormal} \nonumber \\
&\text{ $n(q)$ of $\mathcal{C}^{\xi_0(x)}$ ar $q$, measured unitclockwise.} \nonumber
\end{align}
Note that the variable $x$ of these notations should not be confused with the notation for the trajectory $t\mapsto x(t)$ (we used $x$ to keep the same notations as  \cite{morita1991symbolic}). Similarly, the notation $M_-$ below has nothing to do with the notation $M$ of the previous sections. 

\smallskip

Then $\pi^{-1}(\mathcal{C})$ is parametrized as
\begin{equation*}
\pi^{-1}(\mathcal{C})=\{(j,r,\phi) \mid \ 1\leq j\leq 3, \ 0\leq r \leq 2\pi r_0, \ 0\leq \phi<2\pi\}.
\end{equation*}
We set 
\begin{equation*}
M_-=\left\{x\in \pi^{-1}(\mathcal{C}) \mid \ \frac{\pi}{2}\leq \phi(x)\leq \frac{3}{2}\pi\right\}
\end{equation*}
the set of incidental vectors and we introduce the equivalence relation
\begin{equation*}
x\sim y \text{\quad if and only if \quad} \Inv(x)=y \text{ or } x=y
\end{equation*}
where $\Inv:\pi^{-1}(\mathcal{C})\rightarrow \pi^{-1}(\mathcal{C})$ is defined by 
\begin{equation*}
\Inv(j,r,\phi)=(j,r,\pi-\phi)  \text{ mod } 2\pi.
\end{equation*}
It is then natural to identify the quotient $\pi^{-1}(\mathcal{C})/\sim$ with $M_-$, and we often use this identification in the sequel.

\smallskip

We define the first collision time $\tau_+$ and the last collision time $\tau_{-}$ by 
\begin{align}
\tau_+(x)&=\inf\{t>0 \mid \ \pi(S_tx)\in \mathcal{C}\} \nonumber \\
\tau_-(x)&=\sup\{t<0 \mid \ \pi(S_tx)\in \mathcal{C}\} \nonumber
\end{align}
with the convention that $\tau_+(x)$ (resp. $\tau_-(x)$) is $+\infty$ (resp. $-\infty$) if the set in the definition is empty.

\smallskip

The nonwandering set of the flow $S_t$ is
\begin{equation*}
\Omega=\{x\in \pi^{-1}(X)\cup M_- \mid \ \forall t\in\R, \ \pi(S_tx)\notin\Gamma\}.
\end{equation*}
We also define $\Omega_-=M_-\cap \Omega$. We define the local map $T$ and $T^{-1}$ by 
\begin{align}
T(x)&=S_{\tau_+(x)}(x) \  \text{ if } \tau_+(x)<+\infty, \nonumber \\
T^{-1}(x)&=S_{\tau_-(x)}(x) \ \text{ if } \tau_-(x)>-\infty, \nonumber
\end{align} 
and we see that $T^{-1}$ is the inverse map of $T$ and $T$ is locally diffeomorphic.

\smallskip

For $x\in \Omega_-$, we put $\xi_j(x)=\xi_0(T^jx)$. The sequence $\xi=(\xi_j)_{j=-\infty}^{\infty}$ is called the itinerary of $x$ if $\xi_j=\xi_j(x)$ and we write $\xi$ as $\xi(x)$.

\smallskip

We set
\begin{equation*}
\Sigma=\left\{ \xi=(\xi_j)_{j=-\infty}^\infty \in \prod_{j=-\infty}^{\infty} \{1,2,3\} \mid  \ \xi_j\neq\xi_{j+1} \text{ for any $j\in\Z$}\right\}
\end{equation*}
We define the metric $d_\rho:\Sigma\times\Sigma\rightarrow \R$ by 
\begin{equation} \label{e:defdist}
d_\rho(\xi,\eta)=\rho^n \quad \text{if } \xi_j=\eta_j \text{ for } |j|<n \text{ and } \xi_n\neq \eta_n \text{ or } \xi_{-n}\neq \eta_{-n}.
\end{equation}

We say that a point $x\in \Omega_{-}$ solves the itinerary problem
\begin{equation} \label{e:itin}
\xi(y)=\xi\in \Sigma
\end{equation}
if the itinerary $\xi(x)$ of $x$ coincides with the sequence $\xi$.

\begin{lemma}[\cite{morita1991symbolic}]
For any $\xi\in \Sigma$, there exists a unique $x\in \Omega_{-}$ which solves the itinerary problem \eqref{e:itin}. In addition, if we denote by $x(\xi)$ the solution of \eqref{e:itin}, there exists a constant $C>0$ such that
\begin{equation} \label{e:ineqtau}
|\tau_+(x(\xi))-\tau_+(x(\eta))|<Cd_\rho(\xi,\eta) \quad \text{for any } \xi,\eta\in \Sigma,
\end{equation} 
where $\rho=r_0/(1+r_0)$ and $d_\rho$ denotes the metric on $\Sigma$ defined by \eqref{e:defdist}. Moreover, if $r_0$ is sufficiently small, we can take $C=1$ in \eqref{e:ineqtau}.
\end{lemma}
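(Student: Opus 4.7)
The plan is to derive both halves of the lemma from the uniform hyperbolicity of the billiard map $T$ on the non-wandering set $\Omega_-$, combined with explicit geometric estimates. First I would set up a Markov partition of $\Omega_-$. For each $j\in\{1,2,3\}$, the piece $\Omega_-^{j}=\Omega_-\cap \pi^{-1}(\mathcal{C}^{j})$ is a bounded rectangle in the $(r,\phi)$ coordinates. For a prescribed $\xi_0=j$ and an admissible next symbol $\xi_1=k\neq j$, the slice $\{y\in \Omega_-^{j}:\xi_1(y)=k\}$ is a ``vertical'' substrip whose $T$-image in $\Omega_-^{k}$ is a ``horizontal'' substrip. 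This is the usual Smale horseshoe picture on three symbols with the one-step incompatibility rule $\xi_j\neq \xi_{j+1}$ defining $\Sigma$.

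Next I would iterate this construction. For $\xi\in \Sigma$ and $n\geq 0$, set
\[ R_n(\xi)=\{y\in\Omega_-: \xi_j(y)=\xi_j \text{ for } |j|\leq n\}. \]
Each $R_n(\xi)$ is a curvilinear rectangle whose unstable width is controlled by the forward word $(\xi_1,\ldots,\xi_n)$ and whose stable height is controlled by the backward word $(\xi_{-n},\ldots,\xi_{-1})$. The quantitative hyperbolic input is that, in suitable coordinates, the derivative of $T$ expands unstable tangent vectors by a factor at least $(1+r_0)/r_0$ at each reflection; this is the content of the Jacobi (or mirror) equation for a convex obstacle of radius $r_0$, given that the free flight length between any two obstacles is approximately $1$. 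Consequently $T^{-1}$ contracts unstable directions by $\rho=r_0/(1+r_0)$, and similarly for $T$ on stable directions, so $\mathrm{diam}(R_n(\xi))\leq C_0 \rho^n$ for some absolute $C_0$. By compactness, $\bigcap_n R_n(\xi)$ is nonempty (admissibility of $\xi$ guarantees each $R_n(\xi)$ is nonempty), and the diameter bound forces it to be a single point $x(\xi)\in\Omega_-$. This settles existence and uniqueness.

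For the stability estimate, suppose $\xi_j=\eta_j$ for $|j|<n$. Then both $x(\xi)$ and $x(\eta)$ belong to $R_{n-1}(\xi)$, hence lie at distance $\leq C_0\rho^n$ in $M_-$. The first return time $\tau_+$ is smooth on $\Omega_-$ (since $\xi\in\Sigma$ never repeats a symbol, no trajectory in $\Omega_-$ is tangent to an obstacle), with Lipschitz constant bounded uniformly on $\Omega_-$. Applying this bound gives
\[ |\tau_+(x(\xi))-\tau_+(x(\eta))|\leq C\rho^n=Cd_\rho(\xi,\eta), \]
with $C>0$ independent of $\xi,\eta$. To upgrade this to $C=1$ for small $r_0$, I would revisit the explicit geometry: as $r_0\to 0$, each $\tau_+$-value tends uniformly to $1$ (the distance between centers minus $2r_0$), and the variation of $\tau_+$ under infinitesimal perturbations of the collision data is of order $r_0$, because the outgoing ray is nearly perpendicular to the segment joining obstacle centers. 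Thus for $r_0$ small enough the Lipschitz constant of $\tau_+$ on each $R_{n-1}(\xi)$ can be made arbitrarily small, giving $C\leq 1$.

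The hard part is the explicit expansion rate $\rho=r_0/(1+r_0)$: this is not a soft hyperbolicity statement but an explicit infinitesimal calculation, most cleanly carried out via invariant cone fields in the style of Wojtkowski together with the Jacobi equation for the reflected beam. Obtaining exactly $C=1$ for small $r_0$ similarly requires a quantitative, rather than compactness-based, Lipschitz estimate on $\tau_+$ as a function of the collision data.
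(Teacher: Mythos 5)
This lemma is quoted from Morita's paper \cite{morita1991symbolic} and is \emph{not} proved in the present paper; it is used as a black box (the paper only proves its Corollary \ref{c:morita} from it). So there is no internal argument to compare your sketch against. On its own terms, your sketch lays out the standard dispersing-billiard architecture in the right way: a three-symbol horseshoe on $\Omega_-$ with the nearest-neighbor exclusion, uniform hyperbolicity giving exponentially shrinking cylinder rectangles $R_n(\xi)$, a nested-intersection argument for existence and uniqueness of $x(\xi)$, and a Lipschitz estimate on $\tau_+$ to convert phase-space closeness into \eqref{e:ineqtau}. You also correctly flag that the precise rate $\rho = r_0/(1+r_0)$ is not soft and must come from a mirror/Jacobi computation.

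Where the sketch does not hold up is the $C=1$ refinement. Your heuristic is that the Lipschitz constant of $\tau_+$ tends to $0$ as $r_0\to 0$. In the natural $(r,\phi)$ coordinates this is doubtful: the spread of $\tau_+$ over $\Omega_-$ and the diameter of $\Omega_-$ itself both scale like $r_0$, so the naive Lipschitz quotient is $O(1)$, not $o(1)$. Moreover, two itineraries at $d_\rho$-distance $\rho^n$ agree only to depth $n-1$, so the coarse route gives $|\tau_+(x(\xi))-\tau_+(x(\eta))|\lesssim L\cdot\mathrm{diam}(\Omega_-)\cdot\rho^{\,n-1}$, which is of order $\rho^n$ but not visibly $<\rho^n$: one must control the actual numerical constants, not just the orders in $r_0$. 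Obtaining $C=1$ requires a genuine per-step estimate coupling the unstable expansion to the differential of $\tau_+$, which is precisely the nontrivial content of Morita's theorem (and which the present paper then telescopes in the proof of Corollary \ref{c:morita}). You acknowledge this computation is missing, but the heuristic you offer in its place would not, as stated, close the gap.
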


\begin{corollary} \label{c:morita}
Let $n\geq 1$, and $\xi_0,\ldots, \xi_n\in \{1,2,3\}^n$ such that $\xi_j\neq \xi_{j+1}$ for $0\leq j\leq n-1$. Let $\gamma, \gamma'$ be two geodesics such that $\gamma(0),\gamma'(0)\in\mathcal{C}^{\xi_0}$ and whose coding sequences both start with $\xi_0,\xi_1,\ldots,\xi_n$. We denote by $t_n$ (resp. $t_n'$) the time at which $\gamma$ (resp. $\gamma'$) hits $\mathcal{C}^{\xi_n}$. Then
\begin{equation} \label{e:difftemps}
|t_n-t_n'|\leq 3Cr_0
\end{equation}
where $C$ is the same constant as in \eqref{e:ineqtau}.
\end{corollary}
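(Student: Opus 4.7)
The plan is to view $t_n - t_n'$ as a telescoping sum of inter-bounce times and to apply the stability estimate \eqref{e:ineqtau} separately at each bounce, exploiting the fact that after shifting the itineraries by $j$ steps, they still agree on a long symmetric interval around $0$.

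First I would reduce to the case where $\gamma(0)$ and $\gamma'(0)$ both lie in $\Omega_{-}$, i.e.\ that $\gamma$ and $\gamma'$ are trapped in $\mathcal{C}^1\cup\mathcal{C}^2\cup\mathcal{C}^3$ in both past and future. The set of $x\in M_{-}$ realising a given finite prefix $(\xi_0,\ldots,\xi_n)$ is open, and for every admissible bi-infinite extension of that prefix the lemma produces a point of $\Omega_{-}$ inside this cylinder; hence $\Omega_{-}$ is dense in the cylinder. Since each of the inter-bounce times $\tau_+(T^j x)$, $0\leq j\leq n-1$, depends continuously on $x$ wherever the first $n$ bounces are defined, the general case of the corollary follows by approximation.

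Assuming then $\gamma(0)=x(\xi)$ and $\gamma'(0)=x(\eta)$ with $\xi,\eta\in\Sigma$ satisfying $\xi_k=\eta_k$ for $0\leq k\leq n$, I would use the identity $T^j x(\xi)=x(\sigma^j\xi)$ (where $\sigma$ is the shift on $\Sigma$) to decompose
\begin{equation*}
t_n - t_n' \;=\; \sum_{j=0}^{n-1} \bigl(\tau_+(x(\sigma^j\xi)) - \tau_+(x(\sigma^j\eta))\bigr).
\end{equation*}
The sequences $\sigma^j\xi$ and $\sigma^j\eta$ coincide at every index $k$ with $-j\leq k\leq n-j$, so the first index of possible disagreement has absolute value at least $\min(j,n-j)+1$, which gives $d_\rho(\sigma^j\xi,\sigma^j\eta)\leq \rho^{\min(j,n-j)+1}$. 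Plugging into \eqref{e:ineqtau} and summing a geometric series,
\begin{equation*}
|t_n - t_n'| \;\leq\; C\sum_{j=0}^{n-1}\rho^{\min(j,n-j)+1} \;\leq\; 2C\sum_{m\geq 1}\rho^m \;=\; \frac{2C\rho}{1-\rho} \;=\; 2Cr_0,
\end{equation*}
where the last equality uses $\rho/(1-\rho)=r_0$. This comfortably fits into the announced bound $3Cr_0$.

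The only genuine point requiring care is the density/continuity argument in the first step (to handle $\gamma,\gamma'$ not lying in $\Omega_-$), together with the symbolic-distance counting after the shift. Everything else is a direct application of the stability estimate of the previous lemma and a geometric-series summation.
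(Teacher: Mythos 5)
Your proof follows essentially the same route as the paper's: decompose $t_n-t_n'$ as a telescoping sum of inter-bounce times, conjugate to the shift via $T^jx(\xi)=x(\sigma^j\xi)$, bound $d_\rho(\sigma^j\xi,\sigma^j\eta)$ using the fact that the shifted sequences agree on a symmetric window around~$0$, apply \eqref{e:ineqtau}, and sum a geometric series using $\rho/(1-\rho)=r_0$. The paper applies exactly this reasoning ``from the middle of the sequence'', splitting into even and odd $n$; your use of $\min(j,n-j)$ handles all $n$ in one pass and yields the slightly sharper $2Cr_0$, but these are the same computation.

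The one genuine point of departure is your preliminary reduction to $\gamma(0),\gamma'(0)\in\Omega_-$, and here there is a real gap in the argument as written. You are right that this step is needed --- the Lemma's estimate \eqref{e:ineqtau} is formulated only for the solutions $x(\xi)$ with $\xi\in\Sigma$, i.e.\ for points of $\Omega_-$, whereas the corollary is applied (in Lemma~\ref{l:times}) to geodesics which merely share a finite prefix and may well leave $\mathcal{C}^1\cup\mathcal{C}^2\cup\mathcal{C}^3$ eventually. The paper's proof silently applies \eqref{e:ineqtau} to such geodesics, so you are flagging a subtlety the paper does not address. However, your justification --- that ``for every admissible bi-infinite extension of the prefix the lemma produces a point of $\Omega_-$ inside the cylinder, hence $\Omega_-$ is dense in the cylinder'' --- does not follow: it only shows the intersection is nonempty. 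In fact $\Omega_-$ restricted to the cylinder is (the image of) a Cantor set, of lower dimension than the open cylinder, and is not dense in it; a nearby point whose orbit eventually escapes to $\Gamma$ has a whole neighbourhood of escaping points. To actually close the gap one would have to go back to the \emph{proof} of Morita's stability estimate and check that the cone/monotonicity argument there already bounds the oscillation of each $\tau_+(T^j\cdot)$ over the full finite-prefix cylinder (which is plausible, and is presumably what the author implicitly relies on), rather than try to approximate by $\Omega_-$ points. So: same core argument and a correct instinct about the $\Omega_-$ issue, but the density step as stated is not a valid reduction.
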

\begin{proof}
We show that if $n$ is even, then $|t_n-t_n'|\leq 2Cr_0$. Together with \eqref{e:ineqtau}, it implies \eqref{e:difftemps} also for odd $n$.

\smallskip

Let $n$ be even. We apply \eqref{e:ineqtau} repeatedly, starting from the middle of the sequence $\xi_0,\ldots, \xi_n$, i.e., from $\xi_{n/2}$. We see that 
\begin{align}
&|(t_{k+1}-t_k)-(t_{k+1}'-t_k')|\leq C\rho^{k+1} \quad \text{ for $0\leq k\leq n/2-1$} \nonumber \\
&|(t_{k+1}-t_k)-(t_{k+1}'-t_k')|\leq C\rho^{n-k} \quad \text{ for $n/2\leq k\leq n-1$}. \nonumber
\end{align}
Summing these inequalities over $0\leq k\leq n-1$ and noting that $t_0=t_0'=0$, we get \eqref{e:ineqtau}.
\end{proof}

Let us now turn to the construction of the geodesic which does not meet $\omega(t)$ on the time interval $[0,T]$. For that, we define the following three ``zones'' of $X$:
\begin{equation*}
Z_1=\conv(\mathcal{C}^2,\mathcal{C}^3), \quad Z_2=\conv(\mathcal{C}^1,\mathcal{C}^3), \quad Z_3=\conv(\mathcal{C}^1,\mathcal{C}^2)
\end{equation*}
where $\conv$ denotes the convex hull (see Figure \ref{f:domain}).

\smallskip

The next lemma roughly says that for any splitting of $[0,T]$ into time intervals and any choice of zones (we associate a zone to each interval of the splitting), there exists a geodesic which, for any time $t\in[0,T]$, lies in the zone associated to the time interval to which $t$ belongs. However, for such a result to be true, we need to relax a bit the time splitting, allowing times of switching from one zone to another to be a bit flexible: this is the role of the times $T_j'$ in the next lemma.
\begin{lemma} \label{l:times}
Fix $n\geq 2$ and real numbers $0=T_0<T_1<\ldots<T_{n-1}<T$ such that $T_{j+1}-T_j\geq 10$ for $0\leq j\leq n-2$. Take also $(a_0,\ldots, a_{n-1})\in\{1,2,3\}^n$ with $a_j\neq a_{j+1}$ for $0\leq j\leq n-2$. Then there exists a geodesic $\gamma$ of $X$ and times $0=T_0'<T_1'<\ldots<T_n'=T$ such that $|T_j'-T_j|\leq 3$ for any $0\leq j\leq n-1$, and $\gamma(t)\in Z_{a_j}$ for any $0\leq j\leq n-1$ and any $t\in [T_j',T_{j+1}']$.
\end{lemma}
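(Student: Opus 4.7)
The plan is to prescribe the itinerary $\xi=(\xi_k)_{k\in\mathbb{Z}}\in\Sigma$ of $\gamma$ section by section, imposing that in the $j$-th section the geodesic bounces alternately between the two obstacles $\mathcal{C}^{b_j},\mathcal{C}^{c_j}$ of $Z_{a_j}$, where $\{b_j,c_j\}=\{1,2,3\}\setminus\{a_j\}$. Any chord from $\mathcal{C}^{b_j}$ to $\mathcal{C}^{c_j}$ lies in $\conv(\mathcal{C}^{b_j},\mathcal{C}^{c_j})=Z_{a_j}$, so this automatically produces the required inclusion $\gamma(t)\in Z_{a_j}$ on the corresponding time interval. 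For $1\leq j\leq n-1$ the two successive zones $Z_{a_{j-1}},Z_{a_j}$ share exactly one circle $\mathcal{C}^{s_j}$ with $s_j=\{1,2,3\}\setminus\{a_{j-1},a_j\}$, and the only way to glue sections $j-1$ and $j$ is to place the transition bounce on $\mathcal{C}^{s_j}$; this fixes the parity of the number $N_j$ of bounces of section $j$ as a function of $a_{j-1},a_j,a_{j+1}$ (odd when $a_{j-1}\neq a_{j+1}$, even otherwise). Completing $\xi$ with arbitrary doubly-infinite periodic tails gives an element of $\Sigma$, and the lemma of \cite{morita1991symbolic} supplies a unique geodesic $\gamma$ realizing it.

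The key quantitative step is to control the duration of section $j$ as a function of $N_j$. To this end I would compare $\xi$, restricted to the section, with the $2$-periodic reference sequence $\xi^{(j)}=\overline{b_jc_j}$ aligned on the first bounce of the section; the orbit of $\xi^{(j)}$ is the perpendicular periodic orbit between $\mathcal{C}^{b_j}$ and $\mathcal{C}^{c_j}$, along which $\tau_+\equiv 1$. Since the two codings coincide at positions $0,\ldots,N_j-1$, the stability inequality \eqref{e:ineqtau} applied at the $k$-th bounce of the section bounds the chord-length discrepancy by $C\rho^{\min(k+1,N_j-k)}$; summing the two geometric tails yields a total duration of section $j$ equal to
\[
N_j+\delta_j\qquad\text{with}\qquad |\delta_j|\;\leq\;\tfrac{2C\rho}{1-\rho}\;=:\;c_0,
\]
a bound independent of $N_j$ and of $j$, and which can be made smaller than $1$ by taking $r_0$ small enough.

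Armed with this estimate I would choose the $N_j$'s inductively. Set $T_0'=0$. Assuming $N_0,\ldots,N_{j-1}$ have already been fixed so that $|T_i'-T_i|\leq 3$ for all $i\leq j$, the hypothesis $T_{j+1}-T_j\geq 10$ gives $T_{j+1}-T_j'\geq 7$; since the duration of section $j$ equals $N_j+\delta_j$ with $|\delta_j|<1$, the set of admissible $N_j$ making $|T_{j+1}'-T_{j+1}|\leq 3$ is a real interval of length $>4$, hence contains at least two consecutive integers of the prescribed parity. Pick one (which is positive because the interval sits well above zero). For the last section $j=n-1$, I would simply extend the coding past the last transition by the periodic pattern $\overline{b_{n-1}c_{n-1}}$: no further transition occurs, so $\gamma$ remains in $Z_{a_{n-1}}$ for all $t\geq T_{n-1}'$, and $T_n':=T$ fits trivially.

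The main obstacle is establishing the uniform bound $|\delta_j|\leq c_0$, independent of the length $N_j$ of the section: without it, a long section would allow unbounded drift between $N_j$ and the real duration, and the adaptive choice of $N_j$ would collapse because the admissible window for $N_j$ could become empty. The exponential decay $\rho^{\min(k+1,N_j-k)}$ in \eqref{e:ineqtau}, together with the fact that $\tau_+\equiv 1$ on the perpendicular periodic orbit, is precisely what makes the bound uniform; the remaining parts of the proof (parity bookkeeping, the induction, and the verification of the zone inclusion) are then routine.
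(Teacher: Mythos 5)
Your proposal is correct and follows essentially the same approach as the paper's proof: encode the geodesic by its bounce sequence, obtain a uniform (in $N_j$) bound on the drift between the bounce count and the actual elapsed time via the exponential stability estimate \eqref{e:ineqtau}, and then choose the number of bounces $N_j$ in each zone adaptively so that the switching times track the prescribed $T_j$'s. The one presentational difference is that you bound the duration of section $j$ by comparing the itinerary directly to the $2$-periodic reference orbit (for which $\tau_+\equiv 1$ since the gap between circles is exactly $1$), whereas the paper routes the same exponential summation through Corollary~\ref{c:morita} to compare geodesics sharing a common initial coding; these are two bookkeepings of the same estimate, and your version makes the crucial point — that the drift bound $|\delta_j|\leq 2C\rho/(1-\rho)<1$ is independent of the section length — somewhat more explicit than the paper's terse justification of Point~2.
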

\begin{proof}
We will describe $\gamma$ by the successive obstacles it hits for $t\geq 0$, i.e., by its coding sequence restricted to positive times (see Fact \ref{coding}).  The key point is to see that thanks to Corollary \ref{c:morita}, all geodesics sharing a long common subsequence at the beginning of their coding sequence accumulate roughly the same discrepancies (with the terminology introduced in the sketch of proof above).

\smallskip

The algorithm which constructs $\gamma$ through its coding sequence then reads as follows:
\begin{enumerate}
\item Start at time $0$ at a point of $\conv(\Delta)$ which also lies on one of the two circles defining the zone $Z_{a_0}$;
\item For any $0\leq j\leq n-2$, choose a number of bounces between the two circles defining the zone $Z_{a_j}$ such that 
\begin{itemize}
\item at the end of the bounces in zone $Z_{a_j}$, it is possible to go to zone $Z_{a_{j+1}}$ (this imposes the last circle on which the geodesic bounces in zone $Z_{a_j}$);
\item any geodesic with coding sequence starting as prescribed by the successive numbers of bounces in the zones $Z_{a_0},\ldots, Z_{a_j}$ verifies $|T_k'-T_k|\leq 3$ for any $0\leq k\leq j$.
\end{itemize}
\item Then we prescribe that $\gamma$ only bounces between the two circles defining the zone $Z_{a_{n-1}}$.
\end{enumerate}
The only point that needs to be justified is Point 2. In fact, thanks to Corollary \ref{c:morita}, all geodesics starting as prescribed by the successive numbers of bounces in the zones $Z_{a_0},\ldots, Z_{a_{j-1}}$ bounce for the last time in $Z_{a_{j-1}}$ at a time which does not vary more than $3Cr_0\ll 1$ among all these geodesics. Therefore, the number of bounces in zone $Z_{a_{j}}$ that we prescribe is the same for all these geodesics.
\end{proof}

We turn now to the proof of Theorem \ref{t:neg}. We define a map $f:[0,T]\rightarrow \mathcal{P}(\{1,2,3\})$ which associates to any time $t\in [0,T]$ the number(s) of the zone(s) that $\omega(t)$ intersects. Here $\mathcal{P}(\cdot)$ denotes the set of all subsets of a given set. 

\smallskip

Recall that we took $r_0,\e,v\ll 1$.  It follows that it takes a time at least $1/(2v)\gg 1$ for the trajectory $\omega(t)$ to visit all zones $Z_1, Z_2, Z_3$. Said precisely, if $0\leq t_1\leq\ldots\leq t_k\leq T$ verify $\bigcup_{k} f(t_k)=\{1,2,3\}$, then $t_k-t_1\geq 1/(2v)$. 

\smallskip

Hence, it is possible to find an integer $n\geq 2$, real numbers $0=T_0<T_1<\ldots<T_{n-1}<T$ and $(a_0,\ldots, a_{n-1})\in\{1,2,3\}^n$ as in Lemma \ref{l:times}, such that $\omega(t)\cap Z_{a_j}=\emptyset$ for $t\in [T_j-3,T_{j+1}+3]$. Then, the geodesic $\gamma$ constructed in Lemma \ref{l:times} remains at a positive distance of $\omega(t)$ for $t\in [0,T]$, which concludes the proof of Theorem \ref{t:neg}.

\appendix

\section{Proof of Fact \ref{coding}} \label{a:fact}
Fact \ref{coding} is well-known (see \cite[Theorem 0]{morita1991symbolic} for a proof using a minimization argument), but we recall here an elementary proof which explicitly shows the Cantor structure underlying this billiard flow (see Remark \ref{r:BirkhoffSmale}).

\smallskip

Let $(\xi_{n})\in \{0,1,2\}^{\mathbb{N}}$ such that $\xi_j\neq \xi_{j+1}$ for any $j\in\mathbb{N}$. We fix a point $A\in X$ which lies in the convex hull of $\Delta$, the equilateral triangle whose vertices are the centers of $\mathcal{C}^1, \mathcal{C}^2, \mathcal{C}^3$. We only consider geodesics starting at $A$.

\smallskip

Our goal is to find an initial angle $\theta$ such that the coding sequence of the geodesic starting at time $0$ at $A$ and making an initial angle $\theta$ with the horizontal axis is $(\xi_{n})$. Intuitively, this will be done step by step, trying to progressively adjust $\theta$ so that the geodesic first hits $\mathcal{C}^{\xi_{0}}$ (which is the case for plenty of geodesics starting at $A$), then restricting this set of geodesics to those then hitting $\mathcal{C}^{\xi_1}$, $\mathcal{C}^{\xi_2},\ldots$ At each step, the set of possible directions is nonempty, closed and contained in the preceding one. The desired geodesic will then be picked in the intersection of this infinite number of nested sets. In the following paragraphs, we make this intuition precise.

\smallskip

In the sequel, the set of velocities $\mathbb{S}^{1}$ will be identified to $[0,2\pi)$, where all angles considered are taken with respect to an axis which we fix once for all arbitrarily. Given $\eta\in[0,2\pi)$, we denote by $\gamma_{\eta}$ the geodesic starting at $A$ with initial angle $\eta$.

\smallskip

For $i\geq 0$, we set $A_{\xi_{0}\xi_{1}...\xi_{i}}\subset [0,2\pi)$ the set of all angles $\eta$ such that the encoding sequence of $\gamma_{\eta}$ starts with $\xi_{0}, \xi_{1}, ..., \xi_{i}$. Our goal is to prove that 

\begin{equation} \label{intersection}
\underset{i\geq 0}{\bigcap} \ A_{\xi_{0}\xi_{1}...\xi_{i}} \neq \emptyset
\end{equation}
since the encoding sequence of any geodesic in this set is $(\xi_{n})_{n\in\mathbb{N}}$.

\smallskip

We see by continuity of the flow that there exists a non-empty set $[\alpha_{0},\beta_{0}]\subset [0,2\pi)$ such that $\eta\in [\alpha_{0},\beta_{0}]$ if and only if the encoding sequence of $\gamma_{\eta}$ starts with $\xi_{0}$. The set $[\alpha_{0},\beta_{0}]$ is what we called $A_{\xi_{0}}$ and we now know that it is non-empty. Moreover, remark that $\gamma_{\alpha_{0}}$ hits $\mathcal{C}^{\xi_{0}}$ tangently on its left and $\gamma_{\beta_{0}}$ hits $\mathcal{C}^{\xi_{0}}$ tangently on its right. 

\smallskip

When $\eta$ now runs over $[\alpha_{0},\beta_{0}]$, because of this last remark, by continuity of the flow, there exists $[\alpha_{1},\beta_{1}]\subset [\alpha_{0},\beta_{0}]$ such that $\eta\in [\alpha_{1},\beta_{1}]$ if and only if $\gamma_{\eta}$ hits successively $\mathcal{C}^{\xi_{0}}$ and $\mathcal{C}^{\xi_{1}}$. The set $[\alpha_{1},\beta_{1}]$ is what we called $A_{\xi_{0}\xi_{1}}$ and we now know that it is non-empty. Moreover, remark again that $\gamma_{\alpha_{2}}$ necessarily hits $\mathcal{C}^{\xi_{1}}$ tangently on its left and $\gamma_{\beta_{2}}$ necessarily hits $\mathcal{C}^{\xi_{1}}$ tangently on its right. 

\smallskip

Iterating this construction, we define successively the closed sets $A_{\xi_{0}\xi_{1}...\xi_{i}}$ for $i\geq 0$ and we remark that 
\begin{equation} \label{cantorintersection}
\forall i \geq 0, \ \  \ \ A_{\xi_{0}\xi_{1}...\xi_{i}}\neq \emptyset , \ \  \  \ A_{\xi_{0}\xi_{1}...\xi_{i}\xi_{i+1}}\subset A_{\xi_{0}\xi_{1}...\xi_{i}}.
\end{equation}

At step $i$, for each $\eta \in A_{\xi_{0}...\xi_{i}}$, we know that the geodesic $\gamma_{\eta}$ hits successively $\mathcal{C}^{\xi_{0}}$, $\mathcal{C}^{\xi_{1}}, \ldots$ until $\mathcal{C}^{\xi_{i}}$. Moreover, we know that the ``extreme" geodesic $\gamma_{\alpha_{i}}$ (resp., $\gamma_{\beta_{i}}$) hits $\mathcal{C}^{\xi_{i}}$ tangently on its left (resp., on its right). For each $\eta\in [\alpha_{i},\beta_{i}]$, we can look at $\gamma_{\eta}$ at the moment just after it hits $\mathcal{C}^{\xi_{i}}$. It defines a point $q^{i}_{\eta}$ on $\mathcal{C}^{\xi_{i}}$ and a velocity $v^{i}_{\eta}$ pointing outwards $\mathcal{C}^{\xi_{i}}$. The key point which makes the argument work is that $q^{i}_{\alpha_{i}}$ is on the left of  $\mathcal{C}^{\xi_{i}}$ and  $v^{i}_{\alpha_{i}}$ points towards left, whereas $q^{i}_{\beta_{i}}$ is on the right of  $\mathcal{C}^{\xi_{i}}$ and  $v^{i}_{\beta_{i}}$ points towards right. The set $\{ (q^{i}_{\eta}, v_{\eta}^{i}), \ \eta\in [\alpha_{i},\beta_{i}]\}$ defines a connected submanifold of dimension $1$ of the phase space with footpoint in $\mathcal{C}^{\xi_{i}}$. Therefore, by continuity of the flow, when $\eta$ runs over $[\alpha_{i},\beta_{i}]$, it is necessary that there exists $[\alpha_{i+1},\beta_{i+1}]\subset [\alpha_{i},\beta_{i}]$ such that for any $\eta\in [\alpha_{i+1},\beta_{i+1}]$, the couple $(q^{i}_{\eta},v^{i}_{\eta})$ defines a geodesic which continues its path by hitting $\mathcal{C}^{\xi_{i+1}}$.

\smallskip

Using \eqref{cantorintersection}, we immediately get \eqref{intersection}, which concludes the proof of Fact 1.

\begin{remark}\label{r:BirkhoffSmale}
The family of sets $A_{\xi_{0}\xi_{1}...\xi_{i}}$ where $(\xi_{i})$ runs over all sequences $\{0,1,2\}^{\mathbb{N}}$ such that $\xi_j\neq \xi_{j+1}$ for any $j\in\mathbb{N}$ defines a Cantor structure similar to that appearing in the construction of the horseshoe map \cite[Section 2.5]{katok1995introduction}.
\end{remark}

\bigskip
\bigskip

\begin{center}
CYRIL LETROUIT

\medskip
\textit{Sorbonne Universit\'e, Universit\'e Paris-Diderot, CNRS, Inria, Laboratoire Jacques-Louis Lions, F-75005 Paris.}

\textit{DMA, \'Ecole normale sup\'erieure, CNRS, PSL Research University, 75005 Paris.}

\medskip

E-mail: \texttt{cyril.letrouit@ens.fr}
\end{center}

\end{document}